\title{Rank inequalities on knot Floer homology of periodic knots} 
\author{Keegan Boyle}  
\email{kboyle@math.ubc.ca}  
\subjclass[2010]{}
\newcommand{\Z}{{\mathbb{Z}}}
\newcommand{\F}{{\mathbb{F}}}
\newtheorem{lemma}{Lemma}
\newtheorem{proposition}{Proposition}
\newtheorem{theorem}{Theorem}
\newtheorem{conjecture}{Conjecture}
\theoremstyle{definition}
\newtheorem{definition}{Definition}[section]
\newtheorem{remark}[definition]{Remark}
\newtheorem{example}[definition]{Example}
\newtheorem*{thm:hls}{Theorem \ref{thm:hls}}
\newtheorem*{conj:ineq}{Conjecture \ref{conj:ineq}}
\DeclareMathOperator{\rank}{rank}
\begin{document}
\begin{abstract}
Let $\widetilde{K}$ be a 2-periodic knot in $S^3$ with quotient $K$. We prove a rank inequality between the knot Floer homology of $\widetilde{K}$ and the knot Floer homology of $K$ using a spectral sequence of Hendricks, Lipshitz and Sarkar. We also conjecture a filtered refinement of this inequality, for which we give computational evidence, and produce applications to the Alexander polynomials of $\widetilde{K}$ and $K$. 
\end{abstract}
\maketitle

\section{Introduction}
A \emph{$p$-periodic knot} $\widetilde{K} \subset S^3$ is one which is fixed by a $\Z/p\Z$ action on $S^3$ such that the fixed set (or \emph{axis}) $\widetilde{A}$ of the action is an unknot disjoint from $\widetilde{K}$. We refer to the image of this knot in the quotient $S^3/(\Z/p\Z) \cong S^3$ as the \emph{quotient knot} $K$, and the image of the axis $\widetilde{A}$ as the axis $A$. Periodic knots have been studied extensively, and although hyperbolic geometry and other tools can often determine the periods and quotients of a particular knot, many relations between periodic knots and knot invariants are unknown. Useful obstructions to these questions come from Murasugi \cite{Mu}, who proved that the Alexander polynomial of the quotient knot divides the Alexander polynomial of the periodic knot, and Edmonds \cite{E}, who proved an inequality involving the genus of the periodic knot and the genus of the quotient. 

A potential newer tool to study these questions is the knot invariant knot Floer homology, developed by Ozsv\'ath and Szab\'o \cite{OS5} and independently Rasmussen \cite{R}. Knot Floer homology is a bigraded abelian group $\widehat{\mathit{HFK}}_i(K,a)$, which is defined using techniques from symplectic geometry. This invariant categorifies the Alexander polynomial in the sense that the Alexander polynomial is the Euler characteristic of $\widehat{\mathit{HFK}}$ \cite{OS4}. In light of the classical results about the Alexander polynomials of periodic knots, it is natural to ask what information $\widehat{\mathit{HFK}}$ carries about periodic knots.

Some work has already been done in the direction of understanding the relationship between periodic knots and knot Floer homology. Using a localization theorem of Seidel and Smith \cite{SS}, Hendricks \cite{H} constructed a spectral sequence from $\widehat{\mathit{HFK}}(\widetilde{K})$ to $\widehat{\mathit{HFK}}(K)$ for $2$-periodic knots $\widetilde{K}$. This spectral sequence was later refined by Hendricks, Lipshitz, and Sarkar \cite{HLS}. The main result of this paper is a corollary of the spectral sequence \cite[Theorem 1.16]{HLS}, stated in Theorem \ref{thm:hls}. In Conjecture \ref{conj:ineq}, we further conjecture a refinement of Theorem \ref{thm:hls} by filtering along the homological grading. Theorem \ref{thm:hls} and Conjecture \ref{conj:ineq} each imply new information about the Alexander polynomials of periodic knots.
\begin{theorem}
 \label{thm:hls}
 Let $\widetilde{K}$ be a $2$-periodic knot in $S^3$ with quotient knot $K$. Let $\lambda$ be the linking number of the axis with $K$. Then there is a rank inequality
\[
\sum_i \rank \bigg{(} \widehat{\mathit{HFK}}_i(\widetilde{K}, 2a + \dfrac{\lambda -1}{2}) \oplus  \widehat{\mathit{HFK}}_i(\widetilde{K}, 2a + \dfrac{\lambda +1}{2}) \bigg{)}\geq \sum_i \rank \widehat{\mathit{HFK}}_i(K, a)
\]
for all $i,a \in \Z$.
 \end{theorem}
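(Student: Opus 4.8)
The plan is to invoke the Hendricks--Lipshitz--Sarkar localization spectral sequence \cite[Theorem 1.16]{HLS} and then carry out the Alexander-grading bookkeeping. That theorem supplies a spectral sequence whose $E_1$ page is built from $\widehat{\mathit{HFK}}(\widetilde{K})\otimes\F_2[\theta,\theta^{-1}]$ and which converges to $E_\infty \cong \widehat{\mathit{HFK}}(K)\otimes\F_2[\theta,\theta^{-1}]$. The engine of the argument is the elementary fact that the total rank of the pages of a spectral sequence is nonincreasing, so $\rank E_\infty \le \rank E_1$; because the rank of each of these free $\F_2[\theta,\theta^{-1}]$-modules equals the $\F_2$-dimension of the corresponding untensored knot Floer homology, the whole content lies in organizing this inequality according to Alexander gradings.

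First I would pin down the dictionary relating the Alexander grading $A_{\widetilde{K}}$ on the periodic knot to the Alexander grading $A_K$ on the quotient. Since the preimage of $K$ is connected, the linking number $\lambda = \mathrm{lk}(A,K)$ is odd, so $\frac{\lambda\pm1}{2}$ are integers and $2a+\frac{\lambda-1}{2}$ and $2a+\frac{\lambda+1}{2}$ are consecutive integers. Geometrically, a Seifert surface for $K$ lifts to a surface for $\widetilde{K}$ whose intersection data roughly doubles, while the branching along the axis contributes the shift by $\frac{\lambda\pm1}{2}$; the upshot is that the generators of $\widehat{\mathit{HFK}}(\widetilde{K})$ contributing to quotient Alexander grading $a$ are exactly those in the two consecutive gradings $2a+\frac{\lambda\pm1}{2}$. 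As $a$ ranges over $\Z$ these pairs partition the integer Alexander gradings of $\widetilde{K}$ into consecutive blocks of size two.

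With the dictionary in hand, I would show that the HLS spectral sequence is a spectral sequence of $A_K$-graded objects, i.e. that its differentials $d_r$ preserve the quotient Alexander grading $a$. For fixed $a$, the $E_1$ page then equals $\bigoplus_i \bigl( \widehat{\mathit{HFK}}_i(\widetilde{K}, 2a+\tfrac{\lambda-1}{2}) \oplus \widehat{\mathit{HFK}}_i(\widetilde{K}, 2a+\tfrac{\lambda+1}{2}) \bigr)$, while the corresponding part of $E_\infty$ is $\bigoplus_i \widehat{\mathit{HFK}}_i(K,a)$. Applying $\rank E_\infty \le \rank E_1$ in each fixed grading $a$ and summing over the homological grading $i$ then yields the stated inequality.

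The step I expect to be the main obstacle is establishing this grading dictionary together with the compatibility of the differentials with the coarser $a$-grading. The localization spectral sequence is constructed in a Floer-theoretic framework in which the Maslov and Alexander gradings transform according to the geometry of the double cover and the placement of the basepoints $w,z$ defining the knot filtration; extracting the precise shift $\frac{\lambda\pm1}{2}$, and verifying that the $d_r$ respect the block decomposition by $a$ so that the rank comparison can be made block by block rather than only in total, is the crux and where I would concentrate the effort.
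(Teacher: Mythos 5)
Your proposal follows essentially the same route as the paper: invoke the Hendricks--Lipshitz--Sarkar spectral sequence of \cite[Theorem 1.16]{HLS}, use that rank is nonincreasing across pages, and sort the contributions by Alexander grading. The one point worth noting is that the step you flag as the crux --- establishing the grading dictionary and the appearance of the two consecutive gradings $2a+\frac{\lambda\pm1}{2}$ --- is already packaged in the cited theorem: its $E^1$ page is $\widehat{\mathit{HFK}}(\widetilde{K})\otimes V\otimes W\otimes\F_2[\theta,\theta^{-1}]$ for an auxiliary two-dimensional space $V$ supported in two adjacent Alexander gradings, and the theorem asserts the splitting along Alexander gradings sending $2a+\frac{\lambda-1}{2}$ on $E^1$ to $a$ on $E^\infty$, so the pair of consecutive gradings of $\widehat{\mathit{HFK}}(\widetilde{K})$ arises from tensoring with $V$ rather than from a geometric Seifert-surface lifting argument that you would need to supply yourself.
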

The following conjecture proposes a version of the rank inequality in Theorem \ref{thm:hls} filtered along the Maslov (or homological) grading $i$.
\begin{conjecture} 
Let $\widetilde{K}$ be a $2$-periodic knot in $S^3$ with quotient knot $K$ and axis $A$, and let $\lambda$ be lk$(K,A)$. Then
\[
\sum_{i \geq q} \rank \bigg{(}  \widehat{\mathit{HFK}}_i(\widetilde{K},\widetilde{a}) \oplus \widehat{\mathit{HFK}}_i(\widetilde{K},\widetilde{a} + 1) \bigg{)} \geq  \sum_{2i \geq q+1} \rank \widehat{\mathit{HFK}}_i(K,a)
\]
and
\[
\sum_{i \leq q} \rank \bigg{(} \widehat{\mathit{HFK}}_i(\widetilde{K},-\widetilde{a}) \oplus \widehat{\mathit{HFK}}_i(\widetilde{K},-\widetilde{a} - 1) \bigg{)} \geq  \sum_{2i \leq q-1} \rank \widehat{\mathit{HFK}}_i(K,-a),
\]
where $\widetilde{a} = 2a + \dfrac{\lambda-1}{2}$.
\label{conj:ineq}
\end{conjecture}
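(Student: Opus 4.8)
\emph{Proof strategy (toward Conjecture \ref{conj:ineq}).} The plan is to upgrade the spectral sequence argument behind Theorem \ref{thm:hls} by remembering the Maslov grading as a filtration rather than immediately summing it away. Recall that Theorem \ref{thm:hls} follows because the localization spectral sequence of \cite{HLS}, built from the Seidel--Smith construction \cite{SS}, has an $E_1$-page assembled from $\widehat{\mathit{HFK}}(\widetilde{K})\otimes\F_2[\theta]$ (in each fixed Alexander grading of the quotient, the two adjacent gradings $\widetilde{a}=2a+\tfrac{\lambda-1}{2}$ and $\widetilde{a}+1=2a+\tfrac{\lambda+1}{2}$ of $\widetilde K$ contribute) and converges, after localizing, to the corresponding piece of $\widehat{\mathit{HFK}}(K)$. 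Since $\rank$ is non-increasing along the pages of any spectral sequence, one obtains the total-rank bound. The conjectured refinement should come from carrying along a second grading, namely the Maslov grading, and tracking it through every page.

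The key steps I would carry out are as follows. First, I would pin down the precise bigrading on the $E_1$-page and on the convergent page, isolating how the intrinsic Maslov grading $i$ of $\widetilde K$ is related to the Maslov grading of $K$. The appearance of $2i\geq q+1$ on the right against $i\geq q$ on the left signals the characteristic Smith-theoretic \emph{doubling} of gradings: a class of $\widehat{\mathit{HFK}}_i(K,a)$ should be matched to $\widetilde K$-generators in Maslov grading roughly $2i$, with the shift by $1$ accounting for the fact that the two paired Alexander gradings $\widetilde a$ and $\widetilde a+1$ carry Maslov gradings differing by one. Second, I would show that the differentials $d_r$ are \emph{filtered} with respect to this Maslov grading in a one-directional way (nonincreasing, say). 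Granting this, the ranks of the associated graded pieces above any threshold can only drop from $E_1$ to $E_\infty$, which yields the partial-sum inequality $\sum_{i\geq q}\rank\bigl(\widehat{\mathit{HFK}}_i(\widetilde K,\widetilde a)\oplus\widehat{\mathit{HFK}}_i(\widetilde K,\widetilde a+1)\bigr)\geq\sum_{2i\geq q+1}\rank\widehat{\mathit{HFK}}_i(K,a)$ for every $q$.

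The second inequality, for $i\leq q$ with the Alexander gradings negated, should then be deduced from the first rather than proved independently. The conjugation symmetry of knot Floer homology, $\widehat{\mathit{HFK}}_i(K,a)\cong\widehat{\mathit{HFK}}_{i-2a}(K,-a)$ together with the analogous symmetry for $\widetilde K$, reverses both the Maslov and Alexander gradings in a compatible way, and applying it to the first inequality should reflect $i\geq q$ into $i\leq q'$ and $a$ into $-a$, producing the second statement.

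The hard part, and the reason this remains a conjecture rather than a theorem, is Step (b): establishing that the higher differentials genuinely respect the Maslov filtration in the claimed direction, and that the grading-doubling relation holds \emph{page by page} rather than merely in total. The Seidel--Smith localization supplies the equivariant (Borel) grading together with the existence and convergence of the spectral sequence, but it does not by itself control how the differentials $d_r$ for $r\geq 2$ interact with the intrinsic Maslov grading of the quotient; ruling out differentials that would violate the filtered bound is precisely what is missing. I expect that closing this gap would require a more refined, grading-aware form of the localization theorem, or an explicit geometric identification of the higher differentials, and until such control is available the filtered inequality can only be supported by the computational evidence given here.
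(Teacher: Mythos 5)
This statement is a conjecture, and the paper does not prove it either; so the honest assessment is that your proposal is not a proof, and neither is anything in the paper. That said, your strategy tracks the paper's own partial treatment quite closely. Your reduction of the second inequality to the first via the conjugation/mirror symmetry is exactly the paper's remark following the conjecture. Your identification of the source of the grading doubling and the shift by one matches the paper's explanation (the factor of $2$ from the identification of the $E^{\infty}$ page with $\widehat{\mathit{HFK}}_*(K)\otimes W\otimes\F_2[\theta,\theta^{-1}]$, the shift by $1$ from the extra $V$ factor). And you correctly locate the gap: the localization theorem gives convergence but does not control how the higher differentials interact with the Maslov grading.

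The one substantive difference is in how the missing control is packaged. You phrase it as one-directionality of the differentials $d_r$ with respect to a Maslov filtration on each page. The paper instead works at the chain level with the bicomplex, decomposes it into square and staircase summands (Lemma \ref{lemma:stair}), and reduces the conjecture to the absence of staircases that begin with a vertical differential at the top and end with a horizontal differential at the bottom (the proposition in Section \ref{sec:5.3}). In the Morse-homology model this absence is supplied by Lemma \ref{lemma:sub} (fixed cells form a subcomplex), which is precisely the ingredient with no known knot Floer analogue. These are two descriptions of the same obstruction, but the staircase formulation is sharper: it isolates a concrete structural feature of the equivariant complex whose absence would suffice, rather than a condition on every page of the spectral sequence. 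If you want to push your approach further, I would suggest recasting your filtration condition in the paper's staircase language and looking for a Floer-theoretic substitute for Lemma \ref{lemma:sub}; absent that, your write-up is an accurate account of why this remains a conjecture.
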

By standard properties of knot Floer homology (see section \ref{sec:3}), the second inequality would follow from the first by considering the mirrors of $K$ and $\widetilde{K}$.

\subsection{Organization}
In Section \ref{sec:morse} we lay out the motivation for Theorem \ref{thm:hls} and Conjecture \ref{conj:ineq}, and prove the corresponding statements in Morse homology. In Section \ref{sec:3} we prove Theorem \ref{thm:hls}, and review some important properties of knot Floer homology which will be useful in Section \ref{sec:4}. In Section \ref{sec:4} we prove applications of Theorem \ref{thm:hls} and Conjecture \ref{conj:ineq} to the Alexander polynomial. Finally, in Section \ref{sec:5} we provide computational and theoretical evidence for Conjecture \ref{conj:ineq}, and explain where the proof in Section \ref{sec:morse} breaks down when applied to knot Floer homology.

\subsection{Acknowledgements}
The author would like to thank Kristen Hendricks and Robert Lipshitz for helpful conversations.

\section{Motivation from Morse homology}\label{sec:morse}

Floer homology theories are modeled on Morse homology, and Theorem \ref{thm:hls} and Conjecture \ref{conj:ineq} are Floer-theoretic analogs of rank inequalities in Morse homology. Specifically, Theorem \ref{thm:hls} is an analog of the following classical result of Smith theory, first developed by Smith \cite{Smith38, Smith39, Smith41}.
\begin{theorem}
Let $X$ be finite-dimensional $G$-CW complex for a finite order $p$-group $G$, with fixed set $F$. Then
\[
\sum_{i \in \Z} \rank H_i(X;\F_p) \geq \sum_{i \in \Z} \rank H_i(F;\F_p).
\]
\end{theorem}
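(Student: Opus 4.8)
The plan is to prove this by reducing to the cyclic case $G = \Z/p$ and then invoking the classical Smith sequences. First I would reduce to $G = \Z/p$ by induction on $|G|$. A nontrivial finite $p$-group has nontrivial center, so $G$ contains a central subgroup $H \cong \Z/p$. Restricting the action makes $X$ an $H$-CW complex, and because $H$ is central the fixed set $X^H$ is preserved by $G$ with $H$ acting trivially, so $X^H$ is naturally a $(G/H)$-CW complex with $(X^H)^{G/H} = X^G = F$. Granting the cyclic case one gets $\sum_i \rank H_i(X;\F_p) \geq \sum_i \rank H_i(X^H;\F_p)$, and since $|G/H| < |G|$ the inductive hypothesis gives $\sum_i \rank H_i(X^H;\F_p) \geq \sum_i \rank H_i(F;\F_p)$; composing these settles the general case, with base case $G = \Z/p$.

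For the cyclic case I would set up the relevant group-ring algebra. Over $\F_p$ we have $R := \F_p[\Z/p] \cong \F_p[\sigma]/(\sigma^p)$, where $\sigma = t - 1$ for a chosen generator $t$, and the norm element is $\tau = 1 + t + \cdots + t^{p-1} = \sigma^{p-1}$, so that $\sigma\tau = \sigma^p = 0$. On a free $R$-module the key facts are $\ker \sigma = \operatorname{im}\tau$ and $\ker \tau = \operatorname{im}\sigma$. I would then view $C := C_*(X;\F_p)$ as a complex of $R$-modules via the cellular action and set $A := C^G = C_*(F;\F_p)$, a subcomplex on which $\sigma$ acts as zero. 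Since $G$ acts freely on the cells of $X \setminus F$, the quotient complex $D := C/A$ is free over $R$ in each degree, and one identifies $D/\sigma D \cong C_*(X/G, F; \F_p)$.

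Because the differential is $R$-linear, $\sigma$ commutes with $d$, so $X_\sigma := \sigma C + A$ and $X_\tau := \tau C + A$ are $\F_p$-subcomplexes of $C$. I would form the short exact sequences such as $0 \to X_\sigma \to C \to C/X_\sigma \to 0$, using the identifications $C/X_\sigma \cong D/\sigma D$ and $X_\tau/A \cong \tau D \cong D/\sigma D$ that follow from the free-module facts above. Assembling these produces the two Smith long exact sequences relating $H_*(C)$, the fixed-point homology $H_*(A) = H_*(F)$, the Smith special homologies $H_*(X_\sigma)$ and $H_*(X_\tau)$, and $H_*(X/G,F)$. The inequality is then extracted from a dimension count: passing to homology never increases total $\F_p$-dimension, and exactness bounds the total rank of $H_*(F)$ by the total rank of $H_*(X)$, the surplus coming entirely from the free part $D$.

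The main obstacle is precisely this final rank bookkeeping. One must iterate the Smith sequences up the filtration $A \subset X_{\sigma} \subset X_{\sigma^2} \subset \cdots$, whose successive quotients are each $\cong D/\sigma D$, and argue that the dimension lost to the free summands cancels in matched pairs and never encroaches on the fixed-point contribution, so that $\dim_{\F_p} H_*(F)$ genuinely survives inside $\dim_{\F_p} H_*(X)$. Controlling the connecting homomorphisms here is the delicate point; this is exactly the content of classical Smith theory, and I would cite the treatment in Bredon's book on compact transformation groups rather than reproduce it.
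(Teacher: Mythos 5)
Your argument is correct, but it is a genuinely different route from the one the paper takes. The paper states this theorem as a classical fact and, later in the same section, derives it (only for $p=2$ and $G=\Z/2$) from the bicomplex $\cdots \xleftarrow{1+\tau} C_*(X) \xleftarrow{1+\tau} C_*(X) \xleftarrow{1+\tau} \cdots$: running the two spectral sequences of the bicomplex gives a spectral sequence from $H_*(X)\otimes\F_2[u,u^{-1}]$ to $H_*(F)\otimes\F_2[u,u^{-1}]$, and the rank inequality falls out. That Tate-style setup is chosen deliberately because it mirrors the Hendricks--Lipshitz--Sarkar localization spectral sequence and because its square/staircase decomposition is what powers the filtered refinement (Floyd's theorem) that the paper actually needs. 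Your approach is the classical Smith one: reduce from a general finite $p$-group to $\Z/p$ by induction through a central subgroup (correct, since $X^H$ is a $(G/H)$-CW complex with $(X^H)^{G/H}=X^G$), then run the Smith special-homology exact sequences for $\sigma=t-1$ and the norm $\tau=\sigma^{p-1}$, with the final rank bookkeeping cited to Bredon. What your route buys is the full statement for arbitrary finite $p$-groups and all primes $p$, which the paper's bicomplex argument does not literally cover; what it gives up is the graded/filtered information and the structural parallel with the Floer-theoretic localization theorem.

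One small correction to your setup: $A$ should be the span of the \emph{fixed cells} (the subcomplex $C_*(F;\F_p)$), not the invariants $C^G$. These differ whenever there are free cells, since a free orbit contributes $\tau\cdot(\text{orbit sum})$ to $C^G$ but nothing to $C_*(F)$; indeed your later identifications ($D=C/A$ free over $R$, and $X_\tau/A\cong\tau D$) only work with $A=C_*(F)$. This is a notational slip rather than a gap, as the rest of your argument clearly uses the intended object.
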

A first attempt at refining this statement might be to restrict the inequality to each homological grading. However, this is immediately false. Consider the case that $X = S^2$, and $G = \Z/2\Z$ acts by reflection so that $F = S^1$. Then $H_1(S^2;\F_2) = 0$, but $H_1(S^1;\F_2) \neq 0$.

However, with more care two refinements to this inequality have been shown. One is the following theorem of Floyd, which is our model for Conjecture \ref{conj:ineq}. Another was proved more recently in \cite{M}. We have also included a modern proof of Floyd's theorem here in the hope that it may be adapted to the knot Floer homology case. See Section \ref{sec:5.3} for further discussion.
\begin{theorem}{\cite[Theorem 4.4]{F}}
\label{thm:floyd}
Let $X$ be a locally compact finite dimensional Hausdorff space. Let $\tau$ be a periodic map on $X$ of prime period $p$, and let $F$ be the fixed set of $\tau$. Then for all $n \in \Z$
\[
\sum_{i \geq n} \rank H_i(X;\F_p) \geq \sum_{i \geq n} \rank H_i(F;\F_p).
\]
\end{theorem}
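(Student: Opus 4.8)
The plan is to deduce Theorem~\ref{thm:floyd} from the localization theorem of Smith theory, organized so that the hypothesis ``$\tau$ acts freely off $F$'' is converted into a single filtration preserving isomorphism whose associated graded comparison \emph{is} the desired tail inequality. Since we work over the field $\F_p$ we have $\rank H_i=\rank H^i$, so it suffices to prove the cohomological inequality $\sum_{i\ge n}\rank H^i(X;\F_p)\ge\sum_{i\ge n}\rank H^i(F;\F_p)$, which I would do with \v{C}ech--Alexander--Spanier cohomology (valid since a finite dimensional space is finitistic). Write $G=\Z/p$ and let $u\in H^*(BG;\F_p)$ be the polynomial generator, of degree $1$ if $p=2$ and degree $2$ if $p$ is odd, so that $H^*(BG;\F_p)$ is one dimensional in every nonnegative degree. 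Form the Borel construction $X_{hG}=EG\times_G X$, set $H^*_G(X)=H^*(X_{hG})$, and recall that because $G$ acts freely on $X\setminus F$ the restriction map
\[
r\colon H^*_G(X)\longrightarrow H^*_G(F)=H^*(F)\otimes H^*(BG)
\]
becomes an isomorphism after inverting $u$.

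First I would record two structural properties of $r$. The fibration $X\to X_{hG}\to BG$ equips $H^*_G(X)$ with the decreasing Serre filtration $\mathcal F^\bullet$ coming from the skeleta of $BG$; its piece $\operatorname{gr}^sH^n_G(X)=E_\infty^{s,n-s}$ is a subquotient of $E_2^{s,n-s}=H^s(G;H^{n-s}(X))$, so in particular $\rank\operatorname{gr}^sH^n_G(X)\le\rank H^{n-s}(X)$ (using that $H^s(BG)$ is one dimensional, and that twisting by a nontrivial $G$-action only shrinks rank). Because $F\hookrightarrow X$ is equivariant, $r$ is induced by a map over $BG$ and hence preserves the filtration: $r(\mathcal F^sH^n_G(X))\subseteq\mathcal F^sH^n_G(F)$. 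On the fixed side the action is trivial, so $F_{hG}\simeq F\times BG$, the spectral sequence degenerates, and $\operatorname{gr}^sH^n_G(F)=H^s(BG)\otimes H^{n-s}(F)$ has rank exactly $\rank H^{n-s}(F)$.

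The heart of the argument is then elementary. By the localization theorem $\ker r$ and $\operatorname{coker}r$ are $u$-power torsion, so (finite generation over $H^*(BG)$ giving a bounded torsion exponent) $r\colon H^n_G(X)\xrightarrow{\ \cong\ }H^n_G(F)$ for all sufficiently large $n$. A filtration preserving isomorphism satisfies $\rank\mathcal F^sH^n_G(X)\le\rank\mathcal F^sH^n_G(F)$, and combined with the equality of total ranks this gives
\[
\rank\big(H^n_G(X)/\mathcal F^s\big)\ \ge\ \rank\big(H^n_G(F)/\mathcal F^s\big)=\sum_{q>n-s}\rank H^q(F).
\]
Since $H^n_G(X)/\mathcal F^s=\bigoplus_{s'<s}\operatorname{gr}^{s'}H^n_G(X)$ has rank at most $\sum_{s'<s}\rank H^{n-s'}(X)=\sum_{q>n-s}\rank H^q(X)$, combining the two bounds yields the inequality for $m=n-s$; as for any given $m$ we may choose $n$ in the stable range and then $s=n-m$, we obtain $\sum_{q>m}\rank H^q(X)\ge\sum_{q>m}\rank H^q(F)$ for every $m$. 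This is Floyd's inequality after reindexing $m=n-1$ and dualizing back to homology.

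I expect the genuine difficulty to be the localization package rather than the linear algebra: one must invoke a version of Smith theoretic localization valid for finitistic, merely locally compact Hausdorff spaces, and one needs $H^*_G(X)$ to be finitely generated over $H^*(BG)$ so that the $u$-torsion is bounded and a stable range exists. Granting these, the filtration preservation of $r$ and the rank count above are routine. I would emphasize that the argument uses an honest filtration preserving localization \emph{isomorphism} $r$, not merely a spectral sequence; it is exactly this extra structure that is unavailable for $\widehat{\mathit{HFK}}$, and tracing where its absence obstructs the analogue is the subject of Section~\ref{sec:5.3}.
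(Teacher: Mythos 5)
Your argument is sound in substance, but it takes a genuinely different route from the proof in the paper. You run Smith theory through the Borel construction: the localization theorem makes $r\colon H^n_G(X)\to H^n_G(F)$ an isomorphism for $n\gg 0$, and comparing the Serre filtrations of source and target --- using $\rank\operatorname{gr}^s H^n_G(X)\le\rank H^{n-s}(X)$ against $\rank\operatorname{gr}^s H^n_G(F)=\rank H^{n-s}(F)$ together with filtration preservation and equality of total ranks --- extracts the tail inequality. The linear algebra there checks out, granted that $H^*(X;\F_p)$ is totally finite-dimensional (so the ranks are finite and a stable range exists) and granted the localization package for finitistic locally compact spaces, which you rightly flag as the real technical burden. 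The paper instead works entirely at the chain level, and only in the case $p=2$ with $X$ a $\Z/2$-CW complex: it forms the bicomplex $\cdots \leftarrow C_*(X)\leftarrow C_*(X)\leftarrow\cdots$ with horizontal differential $1+\tau$ (the homological shadow of your $X_{hG}$ after inverting $u$), decomposes it into squares and staircases via Lemma \ref{lemma:stair}, and uses the fact that fixed cells form a subcomplex (Lemma \ref{lemma:sub}) to rule out staircases of the wrong shape (Lemma \ref{lemma:top}); the vertical-grading comparison between the two ends of each surviving staircase is then precisely the inequality. Your approach buys generality and brevity at the price of taking the filtered localization isomorphism as a black box; the paper's approach buys exactly what you observe at the end, namely that its bicomplex is the direct analogue of the Hendricks--Lipshitz--Sarkar spectral sequence (\ref{eqn:ss}), so the argument isolates the single chain-level input (Lemma \ref{lemma:sub}) whose failure for $\widehat{\mathit{HFK}}$ is the obstruction discussed in Section \ref{sec:5.3}. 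Since a filtration-preserving localization \emph{isomorphism} is likewise unavailable for $\widehat{\mathit{HFK}}$, your route would not transplant any more easily, but as a proof of Theorem \ref{thm:floyd} itself it is correct modulo the standard Smith-theoretic inputs.
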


Floyd's original proof of this fact uses certain long exact sequences in homology. However, in the case where $X$ is a $\Z/p\Z$-CW complex, we can reprove this statement using a spectral sequence similar to (\ref{eqn:ss}). We will restrict to the case $p=2$ for simplicity. The key step in the proof which does not immediately generalize to the knot Floer homology case is the following lemma.
\begin{lemma}
Let $C_*(X)$ be the complex of cellular chains on $X$. Then the subspace of $C_*(X)$ generated by fixed cells is a subcomplex $C_*^{\text{fix}}(X)$.
\label{lemma:sub}
\end{lemma}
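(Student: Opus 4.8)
The plan is to use the $\Z/2$-CW structure on $X$ directly, working throughout over $\F_2$. Recall that in a $\Z/2$-CW complex every cell lies in an orbit of the form $(\Z/2)/H \times D^n$: when $H = \Z/2$ the orbit is a single cell fixed pointwise by $\tau$ (a \emph{fixed cell}), and when $H$ is trivial the orbit is a pair $\{e, \tau e\}$ of distinct cells exchanged by $\tau$ (a \emph{free orbit}). By definition $C_*^{\text{fix}}(X)$ is spanned by the fixed cells, so it suffices to show that for each fixed $n$-cell $e$ the boundary $\partial e$ has vanishing incidence number $[e : f]$ with every free $(n-1)$-cell $f$. Granting this, $\partial e$ is a sum of fixed cells, $C_*^{\text{fix}}(X)$ is closed under $\partial$, and in fact $C_*^{\text{fix}}(X) = C_*(F)$ is the cellular chain complex of the fixed subcomplex.

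The key step is to use how the action interacts with characteristic maps. A fixed $n$-cell $e$ is attached by a $\Z/2$-equivariant map whose domain $D^n$ carries the trivial action; consequently the attaching map $\alpha \colon S^{n-1} \to X^{(n-1)}$ is equivariant for the trivial action on $S^{n-1}$, and its image lies in the fixed set $(X^{(n-1)})^{\Z/2} = F \cap X^{(n-1)}$. In particular $\alpha$ avoids the open free cell $U$ underlying $f$. Since the incidence number $[e:f]$ is the degree of the composite $S^{n-1} \xrightarrow{\alpha} X^{(n-1)} \to X^{(n-1)}/(X^{(n-1)} \setminus U) \cong S^{n-1}$, which collapses everything outside $U$, and the image of $\alpha$ misses $U$, this composite is constant and $[e:f] = 0$, as desired.

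The point requiring care, and the step I expect to be the crux, is that equivariance of $\partial$ by itself does \emph{not} suffice. Because $\tau e = e$, equivariance gives only $[e:f] = [e : \tau f]$, i.e. that $f$ and $\tau f$ occur with equal coefficient; over $\F_2$ this leaves open the possibility that $\partial e$ contains the invariant combination $f + \tau f$, which is $\tau$-invariant but is not a sum of fixed cells. Ruling this out genuinely requires the geometric input that the characteristic map of a fixed cell has image in $F$. This is precisely the feature with no evident analog in the knot Floer setting, as discussed in Section \ref{sec:5.3}.
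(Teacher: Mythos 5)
Your proof is correct and follows essentially the same route as the paper's (much terser) argument: the key input in both is that a fixed cell's characteristic map, being equivariant with trivial action on its domain, has image in the fixed set, so its attaching map cannot meet any free open cell and all incidence numbers with free cells vanish. Your additional observation that equivariance of $\partial$ alone only forces $\partial e$ to be $\tau$-invariant, which over $\F_2$ would still permit terms of the form $f + \tau f$, is a worthwhile clarification of why the geometric input is genuinely needed, and of exactly what fails in the Floer-theoretic setting.
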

\begin{proof}
By the definition of a $G$-CW complex, if a cell has a fixed point then the entire cell is fixed, and by continuity of the group action if a cell is fixed then so is its boundary.
\end{proof}

To see how Theorem \ref{thm:floyd} follows from Lemma \ref{lemma:sub}, consider the following bicomplex of cellular chains on $X$.
\[
\begin{tikzcd}
\dots & C_*(X)  \arrow{l}[above]{1 + \tau}  & C_*(X)  \arrow{l}[above]{1 + \tau} & \dots  \arrow{l}[above]{1 + \tau}
\end{tikzcd}
\]
Consider the spectral sequence $^{h}E_{p,q}^r$ coming from taking the horizontal differentials first. 
\begin{lemma}
The spectral sequence $^{h}E_{p,q}^r$ converges to 
\[
H_{i}(F) \otimes \F_2[u, u^{-1}] \cong  \bigoplus_{p+q = i}\ ^{h}E_{p,q}^{\infty}.
\]
\end{lemma}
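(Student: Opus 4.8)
The plan is to run the horizontal-first spectral sequence of the double complex above (call it $B$) in three stages: compute the $E^1$ page, identify the induced differential $d^1$ using Lemma \ref{lemma:sub}, and then establish degeneration and convergence by comparison with a split sub-bicomplex.

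First I would compute $^h E^1_{p,q}$ by fixing a homological degree $q$ and analyzing the horizontal complex $\cdots \xrightarrow{1+\tau} C_q(X) \xrightarrow{1+\tau} C_q(X) \xrightarrow{1+\tau} \cdots$. Since $\tau$ is an involution, over $\F_2$ we have $(1+\tau)^2 = 0$, so this is a genuine (two-periodic) complex, whose homology is the Tate homology of the $\Z/2$-action on $C_q(X)$. Decomposing $C_q(X)$ as an $\F_2[\Z/2]$-module, every indecomposable summand is either a trivial module $\F_2$ spanned by a fixed cell or a free module $\F_2[\Z/2]$ spanned by a pair of cells interchanged by $\tau$. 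A short computation shows the free summands have vanishing homology while each trivial summand contributes $\F_2$ in every horizontal degree; concretely $\ker(1+\tau)/\mathrm{im}(1+\tau) \cong C_q^{\text{fix}}(X)$, with the isomorphism induced by the inclusion of the fixed cells. Hence $^h E^1_{p,q} \cong C_q^{\text{fix}}(X)$ for every $p \in \Z$.

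Next I would identify the differential $d^1$. Under the identification above, the differential induced by the vertical map is the cellular boundary operator $\partial$ restricted to the span of the fixed cells, and this restriction makes sense precisely because of Lemma \ref{lemma:sub}: since $C_*^{\text{fix}}(X)$ is a subcomplex, $\partial$ sends fixed cells to fixed cells. Therefore $^h E^2_{p,q} \cong H_q(C_*^{\text{fix}}(X)) = H_q(F)$ for every $p$, using that the fixed cells are exactly the cellular chains of the fixed set $F$. This is the step that will not carry over to the knot Floer setting.

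For the last stage I would compare with the sub-bicomplex $B^{\text{fix}}$ whose columns are all equal to $C_*^{\text{fix}}(X)$. Because $1+\tau$ acts as $1+1=0$ on fixed cells, the horizontal differential of $B^{\text{fix}}$ vanishes, and by Lemma \ref{lemma:sub} it is a genuine sub-bicomplex of $B$. Its horizontal-first spectral sequence degenerates at $E^2$ --- being a direct sum of vertical complexes over the columns --- and converges to $\bigoplus_p H_*(C_*^{\text{fix}}(X)) \cong H_*(F)\otimes \F_2[u,u^{-1}]$, where $u$ records the horizontal shift. The inclusion $B^{\text{fix}} \hookrightarrow B$ induces the identity, hence an isomorphism, on $^h E^1$, so by the comparison theorem for spectral sequences it is an isomorphism on every later page and on $E^\infty$. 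Since $X$ is finite-dimensional, each anti-diagonal $p+q = i$ contains only finitely many nonzero terms, which guarantees convergence, and since everything is an $\F_2$-vector space there are no extension problems. Thus $^h E^\infty_{p,q} \cong H_q(F)$ for all $p \in \Z$, and summing over the columns contributes the factor $\F_2[u,u^{-1}]$, giving the claimed isomorphism. The hard part is this degeneration: a priori the higher differentials $d^r$ for $r \geq 2$ could be nonzero, and ruling them out directly would mean chasing the zig-zag maps of the double complex, whereas comparison with the split sub-bicomplex $B^{\text{fix}}$ sidesteps this and is exactly where the content of Lemma \ref{lemma:sub} is really used.
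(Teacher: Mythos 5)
Your proof is correct, and its first two stages---identifying $^{h}E^1_{p,q}$ with the fixed cellular chains via the decomposition of $C_q(X)$ into free and trivial $\F_2[\Z/2]$-summands, and identifying the induced differential with the cellular differential of $F$ via Lemma \ref{lemma:sub}---are exactly what the paper does. Where you genuinely diverge is the degeneration step. The paper disposes of the higher differentials directly: a nonzero $d^r$ for $r \geq 2$ would require the zig-zag defining it to start with $\partial x \in \operatorname{im}(1+\tau)$ for a fixed chain $x$, which is impossible since $\partial x$ is again a fixed chain by Lemma \ref{lemma:sub} and $C_*^{\text{fix}}(X) \cap \operatorname{im}(1+\tau) = 0$; the paper phrases this as saying a higher differential would have to map a fixed cell to a non-fixed cell. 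You instead compare with the split sub-bicomplex $B^{\text{fix}}$, observe its spectral sequence degenerates at $E^2$ for trivial reasons, and transport that degeneration across the $E^1$-isomorphism via the comparison theorem. Both arguments rest on the same input (Lemma \ref{lemma:sub}); yours is somewhat more careful, since it avoids reasoning about maps of individual cells on higher pages, where the objects are really subquotients, at the cost of introducing an auxiliary bicomplex. The paper's version is shorter and makes more immediately visible which geometric fact fails in the knot Floer setting, which is the point of the discussion in Section \ref{sec:5.3}; your closing remark correctly isolates the same failure point.
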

\begin{proof}
The $E^1$ page is $C_{i}(F) \otimes \F_2[u, u^{-1}]$, since the cells in the kernel mod image of $1 + \tau$ are exactly those fixed by $\tau$. Then the differential on the $E^2$ page is precisely the differential in $C_i(F)$, and all further differentials are 0. Indeed, a non-zero differential on a subsequent page would include a non-zero map from a fixed cell to a non-fixed cell, contradicting Lemma \ref{lemma:sub}.
\end{proof}
On the other hand we also have a spectral sequence $^{v}E_{p,q}^r$ from taking the vertical differentials first. This spectral sequence has 
\[
H_{i}(X) \otimes \F_2[u, u^{-1}] \cong \bigoplus_{p+q = i}\ ^{v}E_{p,q}^1.
\]
However, this spectral sequence must converge to the same homology as $^{h}E_{p,q}^r$ since $X$ is finite-dimensional and hence has a bounded cellular chain complex. Hence we get a spectral sequence from $H_{i}(X) \otimes \F_2[u, u^{-1}]$ to $H_{i}(F) \otimes \F_2[u, u^{-1}]$. This implies the classical Smith inequality
\[
|H_*(X;\F_2)| \geq |H_*(F;\F_2)|
\]
where $|H_*(X;\F_2)|$ is the total dimension of $H_*(X;\F_2)$.

We would like to refine this result to be filtered by the vertical grading in the spectral sequence. To do so, we will need the following definitions and lemma, which apply more generally to any bicomplex of $\F_2$-vector spaces. In this setting we will refer to the horizontal differential as $\partial_h$ and the vertical differential as $\partial_v$.

\begin{definition}
A \emph{square} is any bicomplex of $\F_2$-vector spaces consisting of four non-zero generators $a,b,c,$ and $d$ with $\partial_h(b) = a, \partial_h(d) = c, \partial_v(a)=c$, and $\partial_v(b)=d$. Graphically, we indicate this as shown in the left part of Figure \ref{fig:summands}.

Similarly, a \emph{staircase} is any bicomplex of $\F_2$-vector spaces as shown in the right part of Figure \ref{fig:summands}. Algebraically, a staircase is a collection of generators $\{a_i,b_i|0\leq i\leq n\}$ with $\partial_h(b_i) = a_i$ and $\partial_v(b_i) = a_{i+1}$, where $a_0$ or $b_n$ may be 0, but all other $a_i$ and $b_i$ are non-zero.

The \emph{length} of a staircase is the number of isomorphisms $\partial_h(b_i) = a_i$ and $\partial_v(b_i) = a_{i+1}$ in the diagram, so that a staircase of length 0 is a single generator, and a staircase of length 1 is a single isomorphism between generators.
\end{definition}
\begin{figure}

\begin{tikzcd}
a \arrow[d] & b \arrow[d] \arrow[l] \\
c & d \arrow[l]\\
\end{tikzcd}  
\qquad
\begin{tikzcd}
\dots &b_i \arrow[l] \arrow[d] & \\
&a_{i+1} & b_{i+1} \arrow[l] \arrow[d] \\
&&\dots \\
\end{tikzcd}  

\caption{Square (left) and staircase (right) bicomplexes.}
\label{fig:summands}
\end{figure}

This terminology allows us to break apart bicomplexes into understandable pieces. The following proposition is also proved in \cite{Stelzig}.

\begin{proposition} \cite{K}
Vertically bounded bicomplexes of $\F_2$-vector spaces decompose as direct sums of staircases and squares. 
\label{lemma:stair}
\end{proposition}

To prove this proposition, we first give the following lemmas and definition.

\begin{lemma}
Every square subcomplex of a bicomplex of $\F_2$-vector spaces is a direct summand.
\label{lemma:square}
\end{lemma}
\begin{proof}
Any bicomplex $C$ of $\F_2$-vector spaces is a module over $\F_2[x,y]/(x^2,y^2)$, which is a Frobenius algebra. Any square is a rank 1 free module, and hence projective. However, projective modules over a Frobenius algebra are injective as well, and hence summands.
\end{proof}

\begin{definition}
Let $S= \{a_i, b_i|0 \leq i \leq n\}$ and $S' = \{a_i',b_i'|0 \leq i \leq m\}$ be a pair of disjoint staircase summands of a bicomplex $C$ such that the bigrading of $a_0$ is the same as the bigrading of $a_0'$, the length of $S$ is less than or equal to the length of $S'$, and either $a_0, a_0' \neq 0$ or $a_0 = a_0' = 0$. Then $S$ and $S'$ occupy the same diagonal, $S$ is not longer than $S'$, and they begin in the same bigrading. Let \emph{the sum of $S$ and $S'$}, $S + S'$, be the staircase $\{a_i + a_i', b_i + b_i'| 0 \leq i \leq n\} \cup \{a_i',b_i'|n < i \leq m\}$ if $b_n \neq 0$, or $\{a_i + a_i', b_i + b_i'| 0 \leq i \leq n\}$ if $b_n = 0$. 
\end{definition}

\begin{figure}
\begin{center}
\begin{tikzcd}
a_0 &b_0 \arrow[l] \arrow[d] & \\
&a_1 & b_1 \arrow[l] \arrow[d] \\
&&a_2 \\
\end{tikzcd}  
\qquad $\oplus$
\begin{tikzcd}
a_0' &b_0' \arrow[l]& \\
&&\\
&&\\
\end{tikzcd}  
\qquad $=$
\begin{tikzcd}
(a_0 + a_0') &(b_0+b_0') \arrow[l] \arrow[d] & \\
&a_1 & b_1' \arrow[l] \arrow[d] \\
&&a_2 \\
\end{tikzcd}  
\qquad $\oplus$
\begin{tikzcd}
a_0' &b_0' \arrow[l]& \\
&&\\
&&\\
\end{tikzcd}  
\end{center}
\caption{Replacing a staircase with the sum of two staircases in a direct sum.}
\label{fig:squaresum}
\end{figure}

\begin{lemma}
In the notation above, $S+S'$ is a summand of $C$. Furthermore, if $b_n =0$ then $S \oplus S' = (S+S') \oplus S'$, and if $b_n \neq 0$, then $S \oplus S' = (S + S') \oplus S$. In particular, we can replace one of $S$ or $S'$ with $S + S'$ in a staircase decomposition of $C$. See Figure \ref{fig:squaresum} for an example.
\label{lemma:stairsum}
\end{lemma}
\begin{proof}
It is clear that if $b_n = 0$, then $S \cup S' = S' \cup (S+S')$ and if $b_n \neq 0$, then $S \cup S' = S \cup (S+S')$, and so to check that $S + S'$ is a summand of $C$ it is enough to check that it is a summand of $S \cup S'$. The only place where this might fail is at $a_n$ or $b_n$ where $S$ ends. 

First, if $b_n = 0$, then $S$ ends in a vertical differential, and indeed the final element $a_{n-1} + a_{n-1}'$ of $S + S'$ is not in the image of $\partial_h$ since $a_{n-1}$ is not but $a_{n-1}'$ is. 

Second, if $b_n \neq 0$ then $S$ ends in a horizontal differential, and applying the vertical differential to the element $b_n + b_n'$ of $S+S'$ gives exactly $b_{n+1}'$ so that $S+S'$ is a staircase, as desired.
\end{proof}

\begin{proof}[Proof of Proposition \ref{lemma:stair}] 
Let $C$ be a vertically bounded bicomplex of $\F_2$ vector spaces which has finite dimension in each bigrading and with horizontal differential $\partial_h$ and vertical differential $\partial_v$. By Lemma \ref{lemma:square}, any square subcomplex of $C$ is a summand, so we can quotient these out to get a new bicomplex without any square subcomplexes. We therefore assume there are no square subcomplexes, and in particular, no compositions of horizontal and vertical isomorphisms
\[
\begin{tikzcd}
y \arrow[d, "\partial_v"]
& x \arrow[l, "\partial_h"] \\
z 
\end{tikzcd}  
\ \ \ \ \ \text{or}\ \ \ \ \ 
\begin{tikzcd}
 & x \arrow[d, "\partial_v"] \\
z & y \arrow[l, "\partial_h"] 
\end{tikzcd}  ,
\]
since either of these would necessarily complete to a square by commutativity of the bicomplex. That is, $\partial_h \circ \partial_v = \partial_v \circ \partial_h = 0$.

We now claim that there is a choice of basis which splits $C$ into a direct sum of staircases. We will prove this claim by induction on the number of non-trivial vertical degrees. 

For the base case, we have a single horizontal chain complex. We first choose a basis for the image of $\partial_h$, then extend it to a basis for the kernel of $\partial_h$. Now we choose preimages of the kernel basis elements where possible, and use these elements to extend the basis to the entire complex. By construction this decomposes our complex into trivial staircases (basis elements which are in the kernel but not the image of $\partial_h$),  and length 1 staircases (isomorphisms between the 1-dimensional subspaces spanned by basis elements given by $\partial_h$). Furthermore, each of these is a summand. 

Now consider a bicomplex $C'$ with bounded vertical degrees, which by the inductive assumption has a basis which decomposes it into staircase summands. We will add a horizontal chain complex $C_{top}$ in a new top vertical degree to get a complex $C = C_{top} \to C'$, and we will construct a staircase summand in $C$ which begins in an arbitrary grading of $C_{top}$.

We consider two cases. To begin, suppose $(C_{top},\partial_h)$ is not exact, and choose a basis for $C_{top}$ splitting it into staircase summands as in the base case, and choose a basis element $a \in C_{top}$ which is in the kernel but not the image of $\partial_h$. We will construct a staircase summand containing $a$. Any staircase containing $a$ must start at $a$ since $\partial_h(a) = 0$, so it remains to consider $\partial_v(a)$. 

In this direction, write $\partial_v(a) = b_1 + b_2 + \dots + b_n$ for some basis elements $b_i$ in $C'$. Each $b_i$ is contained in a unique staircase summand in $C'$ by the inductive assumption, and since $\partial_h \circ \partial_v = 0$, $\partial_h(b_i) = 0$ so that these staircases all start in the same bigrading. Now by Lemma \ref{lemma:stairsum} and induction we can find a change of basis for $C'$ decomposing it into new staircase summands (of $C'$) so that $b_1 + b_2 + \dots + b_n = b_1'$ is a basis element and hence contained in one of the staircases. 

We then have a staircase in $C$, but it may be the case that for some other basis elements $c$ in $C_{top}$ and $\{b_i'\}$ in $C'$, $\partial_v(c) = b_1' + b_2' + \dots + b_m'$. That is, it is not obvious in this basis that our staircase is a summand. To fix this, we will change the basis of $C_{top}$ by replacing $c$ with $c+a$, and repeat as necessary until for any basis element $c$ in $C_{top}$ the image under $\partial_v$ is a sum of basis elements disjoint from $\partial_v(a)$. Since $\partial_v \circ \partial_h = 0$, none of these new basis elements are in the image of $\partial_h$, so that in the new basis the staircase is clearly a summand of $C$.

Alternatively, suppose that $(C_{top},\partial_h)$ is exact. Then choose a basis element $a$ in $C_{top}$ which is not in the image of $\partial_h$, and apply basis changes as above. It then remains to consider $\partial_h(a)$. By construction this is a basis element, and so our staircase will be a summand unless $\partial_h(c) = \partial_h(a)$ for some other basis element $c$ in $C_{top}$. In this case replace $c$ with $c+a$ in the basis, and since $C_{top}$ is exact, $c+a$ is in the image of $\partial_h$ and hence in the kernel of $\partial_v$ since $\partial_v \circ \partial_h = 0$. In particular, this change will preserve the condition that $\partial_v(c)$ is a disjoint set of basis elements from $\partial_v(a)$. 

Now for any complex $C_{top}$ we have constructed a staircase summand for $C$, and hence by induction we can decompose $C$ into staircase summands since $C$ is finite dimensional in each bigrading. 
\end{proof}
We now return to the bicomplex of cellular chains on $X$, and give a final lemma before completing the proof of Theorem \ref{thm:floyd}.
\begin{lemma}
There exists a decomposition of the bicomplex 
\[
\begin{tikzcd}
\dots & C_*(X)  \arrow{l}[above]{1 + \tau}  & C_*(X)  \arrow{l}[above]{1 + \tau} & \dots  \arrow{l}[above]{1 + \tau}
\end{tikzcd}
\]
as in Proposition \ref{lemma:stair} such that each staircase with $a_0 = 0$ is length 1.
\label{lemma:top}
\end{lemma}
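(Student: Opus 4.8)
The plan is to begin with a staircase-and-square decomposition supplied by Lemma \ref{lemma:stair} and then to arrange that the top generator of every $a_0 = 0$ staircase is an honest fixed cell; once that is done, Lemma \ref{lemma:sub} forces each such staircase to terminate after a single vertical step.

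First I would record two elementary facts about the horizontal differential $\partial_h = 1 + \tau$. Since $1+\tau$ annihilates fixed cells and sends a free cell $e$ to $e + \tau e$, its image lies in the span of the non-fixed cells; in particular no nonzero sum of fixed cells can lie in $\mathrm{im}(\partial_h)$. On the other hand, the horizontal homology $\ker(\partial_h)/\mathrm{im}(\partial_h)$ in each bigrading is spanned by the fixed cells, exactly as in the $E^1$-page computation above. Consequently every nonzero fixed cell represents a horizontal homology class, and the top generator $b_0$ of any $a_0 = 0$ staircase, being a representative of such a class, may be taken to be a fixed cell.

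With that choice in place the argument is short. For an $a_0 = 0$ staircase whose top $b_0$ is a fixed cell, the next generator is $a_1 = \partial_v(b_0) = \partial(b_0)$, which by Lemma \ref{lemma:sub} is again a sum of fixed cells. If $a_1 \neq 0$, then by the first fact above $a_1 \notin \mathrm{im}(\partial_h)$, so there is no generator $b_1$ with $\partial_h(b_1) = a_1$ and the staircase cannot continue: it has length $1$. (If $\partial b_0 = 0$ the staircase is a single generator, which I would record as an $a_0 \neq 0$ singleton, matching the convention that an isolated fixed cycle is labeled by its surviving $a$-generator rather than a lone $b_0$.) The heart of the matter is that from a fixed cell one can only descend via $\partial_v$ into more fixed cells and can never return along $\partial_h$, so the staircase dies immediately.

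The step I expect to be the main obstacle is confirming that this fixed-cell choice for the tops can be imposed inside a single valid staircase-and-square decomposition. Concretely, I would re-examine the construction in the proof of Lemma \ref{lemma:stair}: take the $\ker(\partial_h)/\mathrm{im}(\partial_h)$ representatives to be fixed cells, and check that the two subsequent basis adjustments preserve this. The collision adjustment replaces $\alpha_1,\alpha_2$ by $\alpha_1,\alpha_1+\alpha_2$ within a single bigrading, and the splitting adjustment only modifies basis elements in lower vertical gradings and propagates downward along staircases; neither move alters the relation $\partial_h(b_0)=0$ for a fixed-cell top. The one genuinely structural point to verify is that a fixed cell forced to lie in $\mathrm{im}(\partial_v)$ — for instance a vertex bounding a free edge, as happens for $S^1$ under reflection — is necessarily recorded as a sink $a_1$ and never as a top $b_0$, since a top lies in the image of neither differential; such cells are thus absorbed into staircases (topped by free cells, or by a fixed cell in a length-$1$ staircase) without ever producing a length-$\geq 2$ staircase whose top is fixed.
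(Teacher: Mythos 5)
Your two structural observations are exactly the right ones, and they are the same facts the paper's argument turns on: the image of $\partial_h = 1+\tau$ lies in the span of the non-fixed cells, so it meets $C_*^{\text{fix}}(X)$ trivially, and $\ker(1+\tau)$ is spanned by $C_*^{\text{fix}}(X)$ together with that image. The gap is in the step you yourself flag as the main obstacle: you never establish that the top generator $b_0$ of an $a_0=0$ staircase can be \emph{taken to be} a fixed chain inside a single valid decomposition. Knowing that $b_0$ represents a horizontal homology class carried by fixed cells only gives $b_0 = \alpha + (1+\tau)\beta$ with $\alpha \in C_*^{\text{fix}}(X)$; replacing $b_0$ by $\alpha$ changes its vertical differential (in fact $\partial\alpha = 0$ automatically when the staircase has length at least $2$, since $\partial\alpha$ is a sum of fixed cells while $\partial b_0 = (1+\tau)b_1$ lies in the span of non-fixed cells), so $\alpha$ cannot head the same staircase, and the leftover piece $(1+\tau)\beta$ still has to be accounted for. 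Your check that the basis adjustments of Lemma \ref{lemma:stair} are harmless also verifies the wrong invariant: preserving $\partial_h(b_0)=0$ is automatic for anything in $\ker(1+\tau)$, but the collision adjustment can replace a fixed-cell basis element by its sum with a non-fixed one, destroying membership in $C_*^{\text{fix}}(X)$, and you do not rule this out.

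The paper sidesteps the need to engineer the decomposition by running the same ideas as a contradiction. Given a staircase summand with $a_0=0$ and length greater than $1$, one has $(1+\tau)b_0 = 0$, hence $b_0 = \alpha + (1+\tau)\beta$; Lemma \ref{lemma:sub} together with $C_*^{\text{fix}}(X)\cap\operatorname{im}(1+\tau)=0$ forces $\partial\alpha = 0$, so $\partial_v\bigl((1+\tau)\beta\bigr) = \partial_v\partial_h(\beta) = b_1+\tau b_1 \neq 0$. That is an L-shape $\beta \mapsto \partial_h\beta \mapsto b_1+\tau b_1$, which must complete to a square summand, contradicting the square-free form guaranteed by Lemma \ref{lemma:stair}. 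If you want to keep your constructive phrasing, you would need to carry out exactly this bookkeeping; as written, the claim that fixed-cell tops can be imposed is asserted rather than proved.
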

\begin{proof}
Start with the decomposition into summands given from Proposition \ref{lemma:stair}. Consider a summand consisting of a single staircase of length greater than 1, and for which $a_0=0$. That is, a staircase which begins with a vertical isomorphism $d(b_0) = b_1 + \tau b_1$. Then observe that $b_0 + \tau b_0 = 0$, and hence $b_0$ is fixed by $\tau$. Now we can write $b_0 = \alpha + \beta + \tau \beta$ where $\alpha \in C_*^{\text{fix}}(X)$ and $\beta$ is in the subspace consisting of generators which are not fixed by $\tau$. 

Since $C_*^{\text{fix}}(X)$ is a subcomplex by Lemma \ref{lemma:sub}, $d(\alpha) = 0$. This implies that $d(\beta + \tau \beta) = d(b_0) = b_1 + \tau b_1$, and hence that $\beta + \tau \beta \overset{d}{\to} b_1 + \tau b_1$ was not part of a square summand. In particular, we have
\[
\begin{tikzcd}
\beta + \tau \beta \arrow[d, "d"]
& \beta \arrow[l, "1 + \tau"] \\
 b_1 + \tau b_1
\end{tikzcd}  
\]
as part of our summand, which is a contradiction with Proposition \ref{lemma:stair}.
\end{proof}

\begin{proof}[Proof of Theorem \ref{thm:floyd} in the case $p=2$ and $X$ is an $\Z/p\Z$-CW complex]
Combining Proposition \ref{lemma:stair} and Lemma \ref{lemma:top}, we see that all generators of $^{v}E^{\infty}_{p,q}$ are represented by staircases in the bicomplex with $a_0 \neq 0$ and $b_n=0$. That is, staircases which end with a horizontal arrow on the top, and a vertical arrow on the bottom. 

Now for any generator of $H_*(F)$, consider the staircase that represents it in the bicomplex. The corresponding generator on $^{v}E^1_{p,q}$ will be in a higher (or equal if the staircase has length 0) vertical grading than the generator in $^{h}E^1_{p,q}$. This gives the desired inequality since the vertical grading on $^{v}E^1_{p,q}$ gives the grading on $H_*(X)$, and the vertical grading on $^{h}E^1_{p,q}$ gives the grading on $H_*(F)$.
\end{proof}

\section{Knot Floer homology background}\label{sec:3}
In this section we will prove Theorem \ref{thm:hls}, and recall some other useful properties of knot Floer homology. Throughout the rest of the paper, let $\widetilde{K}$ be a 2-periodic knot with axis $\widetilde{A}$, and let $K$ be the quotient knot with axis $A$. Let $\lambda$ be the linking number of $K$ with $A$, $i$ be the Maslov grading and $a$ be the Alexander grading. We now prove Theorem \ref{thm:hls} from \cite[Theorem 1.16]{HLS}.

 \begin{thm:hls}
 There is a rank inequality
\[
\sum_i \rank \bigg{(} \widehat{\mathit{HFK}}_i(\widetilde{K}, 2a + \dfrac{\lambda -1}{2}) \oplus  \widehat{\mathit{HFK}}_i(\widetilde{K}, 2a + \dfrac{\lambda +1}{2}) \bigg{)}\geq \sum_i \rank \widehat{\mathit{HFK}}_i(K, a)
\]
for all $i,a \in \Z$. 
 \end{thm:hls}

 \begin{proof}
 Let $V$ and $W$ be 2-dimensional vector spaces with gradings as shown in Figure \ref{fig:gradings}. 
 
 \begin{figure}

\begin{center}
\begin{tabular}{ |c||c|c| } 

\multicolumn{3}{l}{$V:$} \\
 
 \hline
 gr($V$) & $0$ & $1$ \\ 
 \hline
 \hline
 $-1$ & $\F_2$ & $0$ \\ 
 \hline
 $0$ & $0$ & $\F_2$ \\ 
 \hline
\end{tabular}
\quad \quad \quad
\begin{tabular}{ |c||c|c| } 

\multicolumn{3}{l}{$W:$} \\
 
 \hline
 gr($W$) & $-1$ & $0$ \\ 
 \hline
 \hline
 $0$ & $\F_2$ & $\F_2$ \\ 
 \hline
\end{tabular}
\end{center}

\caption{The 2-dimensional vector spaces $V$ and $W$. Columns are Maslov gradings, and rows are Alexander gradings.}
\label{fig:gradings}
\end{figure}

 Then \cite[Theorem 1.16]{HLS} provides a spectral sequence
 \begin{equation}\label{eqn:ss}
 \widehat{\mathit{HFK}}_*(\widetilde{K}) \otimes V \otimes W \otimes \F_2[\theta, \theta^{-1}] \Rightarrow \widehat{\mathit{HFK}}_*(K) \otimes W \otimes \F_2[\theta, \theta^{-1}].
 \end{equation}
which splits along Alexander gradings, taking the grading $2a + \dfrac{\lambda - 1}{2}$ on the $E^1$ page to $a$ on the $E^{\infty}$ page, and gradings of the other parity on the $E^1$ page to 0 on the $E^{\infty}$ page. In particular, the factors $V$ and $W$ are essential to the spectral sequence. Notice that tensoring a complex $X$ with $V$ is the same as taking a direct sum of $X$ with itself after shifting the gradings.

Consider the grading $\widetilde{a} = 2a + \dfrac{\lambda -1}{2}$ on the $E^1$ page. Then there are exactly two gradings ($\widetilde{a}$ and $\widetilde{a} +1$) in $\widehat{\mathit{HFK}}(\widetilde{K})$ which contribute to that $\widetilde{a}$ grading in the tensor product. Furthermore, these two gradings do not contribute to any other gradings in the tensor product. Hence the spectral sequence (\ref{eqn:ss}) gives the result.
 \end{proof}
 
The following theorems of Ozsv\'ath and Szab\'o characterize knot Floer homology for alternating knots and L-space knots respectively in such a way that they can be recovered from the Alexander polynomial. These will be useful in obtaining applications of Conjecture \ref{conj:ineq}.
 \begin{theorem}\cite[Theorem 1.3]{OS}
\label{thm:alt}
Let $K \subset S^3$ be an alternating knot, and write its (symmetrized) Alexander polynomial as
\[
\Delta_K(t) = a_0 + \sum_{s>0}a_s(t^s + t^{-s}).
\]
Then $\widehat{\mathit{HFK}}(S^3,K,s)$ is supported entirely in homological degree $s + \sigma(K)/2$, and 
\[
\widehat{\mathit{HFK}}(S^3,K,s) \cong \Z^{|a_s|}.
\]
\end{theorem}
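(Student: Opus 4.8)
The plan is to show that $\widehat{\mathit{HFK}}(K)$ is \emph{thin} --- supported on a single diagonal $m - s = \delta$ --- with total rank equal to the determinant $\det(K) = |\Delta_K(-1)|$, and then to extract the precise statement. The lower bound is formal: since $\widehat{\mathit{HFK}}$ categorifies the Alexander polynomial, $\sum_m (-1)^m \rank \widehat{\mathit{HFK}}_m(K,s) = a_s$, so $\rank \widehat{\mathit{HFK}}(K,s) \geq |a_s|$ in each Alexander grading and $\sum_s \rank \widehat{\mathit{HFK}}(K,s) \geq \sum_s |a_s|$. The classical Crowell--Murasugi theorem says that the Alexander polynomial of an alternating knot has coefficients of alternating sign, whence $\sum_s |a_s| = |\Delta_K(-1)| = \det(K)$.

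For the matching upper bound, and for thinness, I would induct on the determinant, using that alternating links are quasi-alternating: there is a crossing whose two unoriented resolutions $K_0$ and $K_1$ are again quasi-alternating, with $\det(K) = \det(K_0) + \det(K_1)$ and strictly smaller determinant. The unoriented skein exact sequence for knot Floer homology relates $\widehat{\mathit{HFK}}(K)$, $\widehat{\mathit{HFK}}(K_0)$, and $\widehat{\mathit{HFK}}(K_1)$; using that $K_0$ and $K_1$ are thin on compatible diagonals (the inductive hypothesis), the exact sequence both forces $\widehat{\mathit{HFK}}(K)$ to be thin and gives $\rank \widehat{\mathit{HFK}}(K) \leq \rank \widehat{\mathit{HFK}}(K_0) + \rank \widehat{\mathit{HFK}}(K_1)$, with the unknot ($\det = 1$, total rank $1$) as the base case. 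This yields $\sum_s \rank \widehat{\mathit{HFK}}(K,s) \leq \det(K)$.

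Combining the two bounds gives $\sum_s \rank \widehat{\mathit{HFK}}(K,s) = \det(K) = \sum_s |a_s|$. Because $\widehat{\mathit{HFK}}(K)$ now lies on the single diagonal $m - s = \delta$, each group $\widehat{\mathit{HFK}}(K,s)$ occupies the single Maslov grading $m = s + \delta$, so its Euler characteristic equals $\pm \rank \widehat{\mathit{HFK}}(K,s)$ and hence $\rank \widehat{\mathit{HFK}}(K,s) = |a_s|$. To identify $\delta = \sigma(K)/2$ I would track the diagonal together with the signature through the same skein induction, using the known change of $\sigma(K)$ under the two resolutions and checking the relation on the unknot; this places $\widehat{\mathit{HFK}}(K,s)$ in dimension $s + \sigma(K)/2$.

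The main obstacle is the inductive step: one must confirm that the skein exact sequence carries no excess rank, so that the determinant really splits as $\det(K) = \det(K_0) + \det(K_1)$ at the level of ranks, while simultaneously propagating thinness and the correct diagonal shift and threading the link (not merely knot) versions through the argument. The signature bookkeeping that pins down $\delta$ is delicate but, once thinness is in hand, essentially formal. An alternative to the entire upper-bound step would be to invoke that the double branched cover of an alternating knot is an L-space of order $\det(K)$ and to compare ranks through the corresponding spectral sequence.
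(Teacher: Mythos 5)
The paper does not prove this statement: it is quoted, with citation, as a theorem of Ozsv\'ath and Szab\'o, so there is no internal proof to compare against. Your sketch is nonetheless a correct outline of a known proof, though not of the original one. Ozsv\'ath and Szab\'o's own argument is an induction over a reduced alternating projection using their oriented skein exact sequence; what you describe --- the lower bound $\rank \widehat{\mathit{HFK}}(K,s) \geq |a_s|$ from the Euler characteristic plus Crowell--Murasugi alternation of coefficients, and the matching upper bound $\det(K)$ propagated through an unoriented skein triangle over quasi-alternating links --- is essentially the later Manolescu--Ozsv\'ath proof that quasi-alternating links are Floer-thin, specialized to alternating knots via Ozsv\'ath--Szab\'o's lemma that non-split alternating links are quasi-alternating. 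Your route buys generality (thinness for all quasi-alternating links, and a parallel statement in Khovanov homology) at the cost of needing Manolescu's unoriented skein triangle, in which the three groups appear tensored with powers of a two-dimensional vector space to normalize the changing number of link components; the ``excess rank'' worry you raise is resolved exactly by that normalization, after which the inequality $\rank \leq \rank + \rank$ pinches against the lower bound. Tracking the diagonal $\delta$ against the signature change at a quasi-alternating crossing, with the unknot as base case, does pin down $\delta = \sigma(K)/2$ in the convention of the statement (e.g.\ the trefoil checks out). So the sketch is sound; the caveats you flag --- the link-versus-knot bookkeeping and the rank count in the exact triangle --- are precisely where the published proofs do the real work.
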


\begin{theorem}\cite[Theorem 1.2]{OS2}
\label{thm:lspace}
Let $K \subset S^3$ be an L-space knot. Then there is an increasing sequence of integers
\[
n_{-k} < \dots < n_k
\]
with $n_i = -n_{-i}$, such that for $-k \leq i \leq k$ and
\[
\delta_i = 
\begin{cases} 
      0 & \mbox{ if }i = k \\
      \delta_{i+1} - 2(n_{i+1} - n_i) + 1 & \mbox{ if }k-i \mbox{ is odd}\\
      \delta_{i+1} - 1 &\mbox{ if }k-i>0 \mbox{ is even}, 
   \end{cases}
\]
$\widehat{\mathit{HFK}}(K,a) = 0$ unless $a = n_i$ for some $i$. In this case $\widehat{\mathit{HFK}}(K,a) \cong \Z$ and is supported entirely in homological degree $\delta_i$.
\end{theorem}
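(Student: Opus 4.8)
The plan is to deduce this structural result from the large surgery formula, which ties the knot Floer complex of $K$ to the Heegaard Floer homology of large integral surgeries on $K$. Recall that the full knot Floer complex $CFK^{\infty}(K)$ is a finitely generated, doubly filtered complex whose generators carry coordinates $(i,j)$ in the plane, with Alexander grading $j - i$. For each $s \in \Z$ one extracts the subquotient complex $\widehat{A}_s = C\{\max(i,\, j - s) = 0\}$, and the large surgery formula asserts that for all sufficiently large $N$ there is an identification $\widehat{HF}(S^3_N(K), [s]) \cong H_*(\widehat{A}_s)$ of relatively graded groups. The Alexander grading on $\widehat{\mathit{HFK}}(K)$ is recovered as $\widehat{\mathit{HFK}}(K,s) = H_*(C\{i = 0,\ j = s\})$.

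First I would invoke the defining feature of an L-space knot: since $K$ admits a positive L-space surgery, every sufficiently large surgery $S^3_N(K)$ is again an L-space, so each summand $\widehat{HF}(S^3_N(K), [s])$ has rank exactly $1$. Combined with the large surgery formula, this forces $\rank H_*(\widehat{A}_s) = 1$ for every $s \in \Z$. Next I would exploit the two canonical maps $v_s, h_s : \widehat{A}_s \to \widehat{B}$ induced by the vertical and horizontal quotient maps, where $\widehat{B} = C\{i = 0\} \simeq \widehat{CF}(S^3)$ has homology $\Z$. For $s \gg 0$ the map $v_s$ is a homology isomorphism and for $s \ll 0$ the map $h_s$ is; tracking how these isomorphisms degenerate as $s$ decreases, together with the uniform rank-$1$ condition, pins down the vertical complex $C\{i = 0\}$. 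The conclusion is that $CFK^{\infty}(K)$ is filtered chain homotopy equivalent to a single ``staircase'' complex, so that each nonzero $\widehat{\mathit{HFK}}(K, a)$ has rank $1$ and these occur exactly at the Alexander gradings $a = n_i$ indexing the corners of the staircase.

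Finally I would compute the Maslov gradings. The rank-$1$ generator at Alexander grading $n_i$ is joined to its neighbors by the alternating horizontal and vertical arrows of the staircase, each of which drops the Maslov grading by $1$ while the lengths of the steps account for the gaps $n_{i+1} - n_i$; normalizing by the requirement that $H_*(C\{i = 0\}) = \Z$ is supported in Maslov grading $0$ at the top corner yields the stated recursion for $\delta_i$, with the two cases reflecting whether the relevant step is a horizontal or a vertical arrow of the staircase.

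The main obstacle is the passage in the second step from rank-$1$ homology of all the $\widehat{A}_s$ to rank at most $1$ for $\widehat{\mathit{HFK}}$ in each grading: one must rule out that higher-rank knot Floer homology is being concealed by cancellation in the subquotient complexes. This amounts to showing that the $\widehat{A}_s$ admit reduced models in which the homology rank equals the number of generators, so that the uniform rank-$1$ condition genuinely propagates to the vertical slices and assembles them into the staircase.
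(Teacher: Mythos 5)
This statement is not proved in the paper at all; it is quoted verbatim as \cite[Theorem 1.2]{OS2}, so the only fair comparison is with Ozsv\'ath and Szab\'o's original argument. Your outline does follow the same broad route as theirs: pass to large integral surgeries, use the large surgery formula to conclude that $H_*(\widehat{A}_s)$ has rank $1$ for every $s$, and then try to force the knot Floer complex into a single staircase whose corners give the $n_i$ and whose alternating horizontal and vertical arrows produce exactly the stated recursion for $\delta_i$. The final paragraph on the Maslov gradings is also essentially right once the staircase form is established.

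The genuine gap is the one you name yourself and then do not close: deducing from $\rank H_*(\widehat{A}_s) = 1$ for all $s$ that $\widehat{\mathit{HFK}}(K,a)$ has rank at most $1$ in each Alexander grading. This implication is not formal --- it is the actual content of the theorem --- and your proposed remedy (``the $\widehat{A}_s$ admit reduced models in which the homology rank equals the number of generators'') is a restatement of the desired conclusion rather than an argument, since a priori a reduced model of $CFK^{\infty}(K)$ can have many generators in a fixed Alexander grading that cancel in every subquotient $\widehat{A}_s$ while surviving in $C\{i=0,\ j=s\}$. In \cite{OS2} this step is carried out by a downward induction on the Alexander grading, starting from the topmost nontrivial grading $n_k$: one uses the maps $v_s, h_s$ and the $U$-module structure on $HF^+$ of the large surgery (not merely the rank of $\widehat{HF}$) to show that each successive nonzero group is $\Z$, that consecutive nonzero coefficients of $\Delta_K$ alternate in sign, and that the Maslov gradings are forced as claimed. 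Without supplying that induction (or an equivalent structural argument), the proposal establishes only the rank-one condition on the $\widehat{A}_s$, which is strictly weaker than the theorem.
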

\section{Consequences of a filtered rank inequality}\label{sec:4}
The goal of this section is to prove some interesting consequences of Conjecture \ref{conj:ineq}. Specifically, we will prove some restrictions on the Alexander polynomials of certain periodic knots. To begin, we restate the conjecture.

\begin{conj:ineq} 
Let $\widetilde{K} \in S^3$ be $2$-periodic with quotient knot $K$. Then for all $a,q \in \Z$,
\begin{align*}
\sum_{i \geq q} \rank \bigg{(}  \widehat{\mathit{HFK}}_i(\widetilde{K},\widetilde{a}) \oplus \widehat{\mathit{HFK}}_i(\widetilde{K},\widetilde{a} + 1) \bigg{)} &\geq  \sum_{2i \geq q+1} \rank \widehat{\mathit{HFK}}_i(K,a)
\\
\intertext{and}
\sum_{i \leq q} \rank \bigg{(} \widehat{\mathit{HFK}}_i(\widetilde{K},-\widetilde{a}) \oplus \widehat{\mathit{HFK}}_i(\widetilde{K},-\widetilde{a} - 1) \bigg{)} &\geq  \sum_{2i \leq q-1} \rank \widehat{\mathit{HFK}}_i(K,-a),
\end{align*}
where $\widetilde{a} = 2a + \dfrac{\lambda-1}{2}$.
\end{conj:ineq}

Theorem \ref{thm:hls} and this conjecture both have some nice consequences for the Alexander polynomials of 2-periodic alternating and L-space knots. These follow from the theorems of Ozsv\'{a}th and Szab\'{o} stated in the previous section.

\begin{theorem}
Let $\widetilde{K}$ be a $2$-periodic alternating knot in $S^3$ with alternating quotient $K$ and having linking number $\lambda$ with the axis. Let the Alexander polynomials of $\widetilde{K}$ and $K$ be
\[
\Delta_{\widetilde{K}}(t) = \widetilde{a}_0 + \sum_{\widetilde{s} > 0} \widetilde{a}_{\widetilde{s}} (t^{\widetilde{s}} + t^{-\widetilde{s}}), \mbox{ and } \Delta_{K(t)} = a_0 + \sum_{s > 0} a_s (t^s + t^{-s}),
\]
respectively. Then for each $s$,
\[
|\widetilde{a}_{2s + \frac{\lambda -1}{2}} - \widetilde{a}_{2s + \frac{\lambda +1}{2}}| \geq a_s,
\]
and in particular the number of terms in $\Delta_{\widetilde{K}}$ is at least the number of terms in $\Delta_K$. Additionally, if Conjecture \ref{conj:ineq} holds then 
\[
|2\sigma(K) - \sigma(\widetilde{K})| \leq \lambda + 1.
\]
\label{thm:altineq}
\end{theorem} 
\begin{proof}
The statement follows directly from applying the two inequalities in Conjecture \ref{conj:ineq} to Theorem \ref{thm:alt}. In particular since the inequality is split into Alexander gradings, we can consider $\Delta_K$ one term at a time. Then the inequality $|\widetilde{a}_{2a + \frac{\lambda -1}{2}} - \widetilde{a}_{2a + \frac{\lambda +1}{2}}| \geq a_s$ comes from the total rank inequality in Theorem \ref{thm:hls}, noting that signs on the coefficients of $\widetilde{K}$ alternate. The grading refinement immediately gives
\[
\widetilde{s} + \dfrac{\sigma(\widetilde{K})}{2} \geq 2s + \sigma(K) - 1,
\]
for each grading $\widetilde{s}$ sent to $s$ by the spectral sequence (\ref{eqn:ss}). However, we know that the gradings $2s + \frac{\lambda -1}{2}$ and $2s + \frac{\lambda +1}{2}$ get sent to $s$, so this simplifies to
\[
2\sigma(K) + \lambda +1 \geq \sigma(\widetilde{K}).
\]
Finally, by considering the mirror of $K$ we also get that 
\[
\sigma(\widetilde{K}) \geq 2 \sigma(K) -\lambda -1,
\]
as desired.
\end{proof}
\begin{remark}
For odd order periodic alternating knots, the quotient is automatically alternating, and this has been conjectured for 2-periodic knots as well, in which case the assumption that the quotient is alternating may be removed from Theorem \ref{thm:altineq}. See \cite{B}.
\end{remark}
\begin{example}
Consider the knot $10_{122}$ which is 2-periodic over $4_1$ with $\lambda = 1$. $10_{122}$ has signature $0$ and Alexander polynomial
\[
2t^{-3} + 11t^2-24t+31-24t^{-1}+11t^{-2}-2t^{-3},
\]
whereas $4_1$ also has signature $0$, but Alexander polynomial
\[
-t +3 - t^{-1}.
\]
Looking back at Theorem \ref{thm:alt}, we have Alexander gradings given by the exponents in $\Delta_K$ so that $s \in \{-1,0,1\}$ with $a_s \in \{1,3,1\}$ respectively. Since $\lambda = 1$ these will lift to give $\widetilde{s} = 2s + 0$, and indeed the first inequality is then $2 +11 \geq 1$, $24+31 \geq 3$, and $24+11 \geq 1$. The signature inequality is also satisfied with $2 \geq 1$, $0 \geq -1$, and $-2 \geq -3$. For the $\overline{s}$ inequalities, the computation is similar.
\end{example}
\begin{remark}
The fact that the number of terms in $\Delta_{\widetilde{K}}$ is at least the number of terms in $\Delta_{K}$ also follows from a theorem of Murasugi that all terms in the Alexander polynomial of an alternating knot are nonzero \cite[Theorem 1.1]{Mu2}.
\end{remark}

\begin{theorem}
Let $\widetilde{K}$ be a $2$-periodic L-space knot in $S^3$ with L-space quotient $K$. Then there are at least as many terms in $\Delta_{\widetilde{K}}$ as in $\Delta_K$.  Furthermore let $n$ be the width of $\Delta_K$, again normalize the Alexander polynomial as in Theorem \ref{thm:alt}, and suppose that Conjecture \ref{conj:ineq} holds. Then there is at most one term in $\Delta_{\widetilde{K}}$ with exponent larger than 
\[
2n + \dfrac{\lambda + 1}{2},
\]
and in particular there are at most $4n + \lambda + 4$ terms in $\Delta_{\widetilde{K}}$ total.
\label{thm:lspaceineq}
\end{theorem}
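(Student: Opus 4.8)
The plan is to use the two main facts available: Theorem \ref{thm:hls}, giving the total rank inequality, together with the structure theorem for L-space knots, Theorem \ref{thm:lspace}, which says that $\widehat{\mathit{HFK}}(K,a)$ is either $0$ or $\Z$ supported in a single Maslov grading. First I would establish the ``at least as many terms'' claim. Since $K$ is an L-space knot, each nonzero Alexander grading $a = n_i$ contributes exactly one generator to $\widehat{\mathit{HFK}}(K)$. By Theorem \ref{thm:hls}, for each such $a$ the pair of gradings $\widetilde{a} = 2a + \frac{\lambda-1}{2}$ and $\widetilde{a}+1$ in $\widehat{\mathit{HFK}}(\widetilde{K})$ must together have rank at least $1$, hence at least one of these two Alexander gradings in $\widetilde{K}$ is nonzero, giving a nonzero term in $\Delta_{\widetilde{K}}$. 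Because distinct values of $a$ produce disjoint pairs $\{\widetilde{a}, \widetilde{a}+1\}$ (the pairs for $a$ and $a+1$ are $\{2a+\tfrac{\lambda-1}{2}, 2a+\tfrac{\lambda+1}{2}\}$ and $\{2a+2+\tfrac{\lambda-1}{2},\dots\}$, which do not overlap), these terms are genuinely distinct, so $\Delta_{\widetilde K}$ has at least as many terms as $\Delta_K$.

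For the sharper count assuming Conjecture \ref{conj:ineq}, I would exploit the Maslov grading refinement. The key is that for an L-space knot the top Alexander grading $a = n_k$ sits in Maslov grading $\delta_k = 0$, and this is the \emph{maximal} Maslov grading appearing in $\widehat{\mathit{HFK}}(K)$ (the $\delta_i$ decrease as $i$ decreases). I would apply the first inequality of Conjecture \ref{conj:ineq} with $a = n$ (the width, i.e.\ the top grading) and with $q$ chosen so that the right-hand side $\sum_{2i \geq q+1}\rank \widehat{\mathit{HFK}}_i(K, n)$ picks up the single generator in Maslov grading $0$; concretely $q = 0$ or $q = -1$. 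This forces
\[
\sum_{i \geq q}\rank\bigl(\widehat{\mathit{HFK}}_i(\widetilde K, \widetilde a)\oplus \widehat{\mathit{HFK}}_i(\widetilde K, \widetilde a + 1)\bigr) \geq 1,
\]
which by itself is weak, so the real content must come from bounding the total Alexander support of $\widehat{\mathit{HFK}}(\widetilde{K})$ from above, i.e.\ bounding the genus or width of $\widetilde{K}$.

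To get the width bound I would argue that any Alexander grading $\widetilde{s}$ in $\widehat{\mathit{HFK}}(\widetilde{K})$ with $\widetilde{s}$ exceeding $2n + \frac{\lambda+1}{2}$ lies strictly above the image of every pair $\{\widetilde a, \widetilde a + 1\}$ coming from the range $-n \le a \le n$ of $\Delta_K$, since the largest such pair is $\{2n + \frac{\lambda-1}{2}, 2n + \frac{\lambda+1}{2}\}$. For these ``excess'' gradings the filtered inequality of Conjecture \ref{conj:ineq}, combined with the fact that above the top generator of $K$ there is no homology to support, should force the contribution to be at most one generator; this is where one uses that the signature/Maslov data of the staircase for $K$ terminates at $\delta_k = 0$. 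The final count of $4n + \lambda + 4$ should then follow by adding up: the gradings $\widetilde a$ for $-n \le a \le n$ span an interval of roughly $4n$ consecutive Alexander values (two gradings per each of the $2n+1$ quotient gradings), plus the single allowed excess term above and its mirror-symmetric partner below, with the constant $\lambda + 4$ absorbing the boundary shifts from $\lambda$ and from symmetrizing $\Delta_{\widetilde K}$.

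The main obstacle I anticipate is the excess-term bound: showing that \emph{at most one} term in $\Delta_{\widetilde K}$ exceeds $2n + \frac{\lambda+1}{2}$, rather than merely bounding the total. Theorem \ref{thm:hls} alone cannot see this because it only constrains paired sums of ranks, not the top of the filtration; the argument genuinely requires the homological-grading refinement in Conjecture \ref{conj:ineq} together with the very rigid staircase structure of L-space knot homology (each grading rank $\le 1$, with prescribed Maslov gradings $\delta_i$). Pinning down exactly which value of $q$ isolates the single surviving generator, and verifying the symmetric statement for the lower end via the second inequality of the conjecture, will be the delicate bookkeeping that makes the constant come out to precisely $4n + \lambda + 4$.
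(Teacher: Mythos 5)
Your argument for the first claim (at least as many terms in $\Delta_{\widetilde{K}}$ as in $\Delta_K$) is correct and is essentially the paper's: Theorem \ref{thm:lspace} gives rank one per supported Alexander grading of $K$, Theorem \ref{thm:hls} forces a nonzero group in one of the two gradings $\{\widetilde{a},\widetilde{a}+1\}$, and these pairs are disjoint for distinct $a$.

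The second half has a genuine gap, and moreover points in the wrong direction. You correctly observe that applying Conjecture \ref{conj:ineq} at the \emph{top} of the staircase of $K$ (where $\delta_k=0$) is vacuous --- the paper says exactly this --- but you then propose to recover the bound by ``bounding the total Alexander support of $\widehat{\mathit{HFK}}(\widetilde{K})$ from above, i.e.\ bounding the genus or width of $\widetilde{K}$.'' This cannot work for two reasons. First, Conjecture \ref{conj:ineq} is a one-sided \emph{lower} bound on ranks of $\widehat{\mathit{HFK}}(\widetilde{K})$: at an Alexander grading $a$ where $\widehat{\mathit{HFK}}(K,a)=0$ the right-hand side vanishes and the inequality imposes no constraint whatsoever on $\widetilde{K}$, so ``there is no homology of $K$ to support'' yields no upper bound on the excess gradings. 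Second, the theorem does not bound the genus of $\widetilde{K}$ at all --- it permits one term of arbitrarily large exponent, which is precisely why it is phrased as ``at most $1$ term above the threshold.'' The step you flag as ``the main obstacle I anticipate'' is in fact the entire content of the proof, and it runs through the \emph{other} end of the staircase: for an L-space knot the minimal $\delta_i$ of $K$ equals $-n$ (minus the width), attained at the bottom Alexander grading, so the first inequality of the conjecture applied there forces a generator of $\widehat{\mathit{HFK}}(\widetilde{K})$ in Alexander grading $-n+\frac{\lambda\pm 1}{2}$ with Maslov grading at least $-2n-1$. Because $\widehat{\mathit{HFK}}(\widetilde{K})$ is itself a rigid staircase whose $\delta_i$ strictly decrease from $0$ by amounts controlled by the gaps $n_{i+1}-n_i$, having two or more terms of exponent greater than $2n+\frac{\lambda+1}{2}$ would force the Maslov grading at the required Alexander grading to be too negative, a contradiction; the total count $4n+\lambda+4$ then follows by applying the same argument to the mirror. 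Your proposal never converts the lower bound on a Maslov grading into an upper bound on the number of high-exponent terms, so the key implication is missing.
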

\begin{proof}
As we will see, all statements follow from Theorem \ref{thm:lspace}, the characterization of $\widehat{\mathit{HFK}}(K)$ in terms of $\Delta_K$. 

The inequality between the number of terms in $\Delta_{\widetilde{K}}$ and $\Delta_K$ is clear from Theorem \ref{thm:hls}.

For the other claims, observe that the largest $\delta_i$ in Theorem \ref{thm:lspace} is zero, so that on the maximal Maslov grading Conjecture \ref{conj:ineq} will be trivially satisfied. The other conclusions will follow by considering the minimal Maslov grading. Observe that the smallest $\delta_i$ is negative the width of the Alexander polynomial, $n_{-k} - n_k$, as follows. Since the Alexander polynomial is symmetric each gap $n_{i+1}-n_i$ has a mirrored gap $n_{-i}-n_{-i-1}$, and exactly one of these contributes $2(n_{i+1} - n_i) +1$, while the other contributes $-1$. Summing these gives that indeed the minimal $\delta_i$ is $n_{-k}-n_k$.

This gives the stated bound on the number of terms in $\Delta_{\widetilde{K}}$ of degree larger than $2n + (\lambda +1)/2$ since otherwise the $\delta_i$ for $\widetilde{K}$ corresponding to the minimal $\delta_i$ for $K$ would be too negative.

Finally, the bound on the number of terms in $\Delta_{\widetilde{K}}$ follows from symmetry. Specifically there is also at most one term in $\Delta_{\widetilde{K}}$ with exponent less than $-2n - (\lambda+1)/2$, and hence there are at most $4n + \lambda + 4$ terms total.
\end{proof}

This theorem can be somewhat improved by further assuming the L-space conjecture of Boyer, Gordon and Watson.
\begin{conjecture}\cite[Conjecture 1]{BGW}
Let $M$ be a closed, connected, irreducible, orientable $3$-manifold. Then $M$ is not an L-space if and only if $\pi_1(M)$ is left-orderable.
\label{conj:bgw}
\end{conjecture}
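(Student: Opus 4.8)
The plan is to attack the two implications of the stated equivalence separately, in each case routing the argument through the intermediate notion of a co-orientable taut foliation. For a closed, connected, irreducible, orientable $M$ one expects a trichotomy whose three conditions --- that $M$ is not an L-space, that $M$ carries a co-orientable taut foliation, and that $\pi_1(M)$ is left-orderable --- are mutually equivalent; the present statement is the equivalence of the first and the third. Accordingly I would prove ``left-orderable $\Rightarrow$ not an L-space'' through the chain ``left-orderable $\Rightarrow$ taut foliation $\Rightarrow$ not an L-space,'' and prove ``not an L-space $\Rightarrow$ left-orderable'' through ``not an L-space $\Rightarrow$ taut foliation $\Rightarrow$ left-orderable.'' The point of inserting the foliation condition is that one of the four resulting arrows is already a theorem.

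Indeed, ``taut foliation $\Rightarrow$ not an L-space'' is known: a co-orientable taut foliation perturbs (via Eliashberg--Thurston, with the $C^0$ refinements of Bowden and of Kazez--Roberts) to a tight contact structure whose Heegaard Floer contact invariant is non-zero, and this forces $\rank\,\widehat{\mathit{HF}}(M) > |H_1(M;\Z)|$, i.e. that $M$ is not an L-space. So the terminal arrow in each of my two chains reduces to citing this result. What then remains is purely geometric: to construct a co-orientable taut foliation from a left-ordering of $\pi_1(M)$, to construct one from the failure of the L-space condition, and to recover a left-ordering from a given taut foliation.

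For the constructions I would first reduce to settings where geometric structure can be exploited, using geometrization to treat Seifert fibered spaces, then graph manifolds, and finally hyperbolic manifolds. For the first two families the equivalence is already established --- Boyer--Gordon--Watson and Lisca--Stipsicz in the Seifert fibered case, and Boyer--Clay together with Hanselman--Rasmussen--Rasmussen--Watson for graph manifolds --- so the content concentrates on the hyperbolic pieces and on gluing the JSJ pieces along their tori while simultaneously controlling foliations and orderings across each gluing torus. For the arrow ``taut foliation $\Rightarrow$ left-orderable'' I would pull the foliation back to the universal cover and act on its leaf space, aiming to produce an orientation-preserving action of $\pi_1(M)$ on a simply connected one-manifold and hence a left-ordering.

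The main obstacle --- and the reason this is a central open problem rather than a routine argument --- is the arrow ``not an L-space $\Rightarrow$ taut foliation.'' There is no known functorial bridge manufacturing a geometric object such as a foliation out of the vanishing/non-vanishing data of Heegaard Floer homology; every verification in a special case is ad hoc and relies on extra structure (a Seifert fibration, a graph-manifold splitting, or controllable Dehn filling slopes) that is simply unavailable for a general hyperbolic $M$. The companion arrow ``left-orderable $\Rightarrow$ taut foliation'' similarly lacks any general mechanism for converting a dynamical ordering of $\pi_1$ into a codimension-one foliation. I therefore expect these two foliation-existence arrows to be the crux, and a complete proof to demand a genuinely new technique for building taut foliations.
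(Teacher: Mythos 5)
The statement you were asked about is not a theorem of this paper at all: it is Conjecture~\ref{conj:bgw}, the L-space conjecture of Boyer, Gordon, and Watson, quoted verbatim from \cite{BGW}. The paper supplies no proof and uses the statement only as a hypothesis (``If Conjecture~\ref{conj:bgw} holds\dots'') in the proposition on periodic L-space knots that follows it. So there is no proof in the paper to compare yours against, and your proposal --- as you yourself concede in the final paragraph --- is a research program rather than a proof. Of the four arrows in your two chains, only one is a theorem: ``co-orientable taut foliation $\Rightarrow$ not an L-space,'' which follows from Eliashberg--Thurston perturbation (with the $C^0$ refinements you cite) together with the non-vanishing of the Ozsv\'ath--Szab\'o contact invariant. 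The arrows ``not an L-space $\Rightarrow$ taut foliation'' and ``left-orderable $\Rightarrow$ taut foliation'' are open in general, exactly as you say, and no amount of geometrization or JSJ bookkeeping closes them: the known confirmations (Seifert fibered spaces, graph manifolds via Boyer--Clay and Hanselman--Rasmussen--Rasmussen--Watson) exhaust the cases where extra structure substitutes for the missing bridge between Floer-theoretic and geometric/dynamical data.

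One further caution: even your arrow ``taut foliation $\Rightarrow$ left-orderable,'' which you treat as the routine leaf-space step, is open in general. The leaf space of the pulled-back foliation on the universal cover is a simply connected one-manifold but is typically \emph{non-Hausdorff}, and an action of $\pi_1(M)$ on such an object does not by itself yield a left-ordering; one needs additional input (e.g.\ an $\R$-covered foliation, or a universal-circle action in the sense of Calegari--Dunfield together with a vanishing Euler-class argument to lift from the circle to the line). So three of your four arrows, not two, are genuinely open, and the correct conclusion is that this statement cannot currently be proved --- which is precisely why the paper states it as a conjecture and derives only conditional consequences from it.
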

In particular, assuming this conjecture allows us to drop the assumption that $K$ is an L-space knot in Theorem \ref{thm:lspaceineq}.
\begin{proposition}
Let $\widetilde{K}$ be a $p$-periodic knot with quotient $K$. If Conjecture \ref{conj:bgw} holds and $\widetilde{K}$ is an L-space knot, then $K$ is an L-space knot.
\end{proposition}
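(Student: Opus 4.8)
The plan is to argue the contrapositive, using Conjecture \ref{conj:bgw} to translate the L-space property into a statement about (non-)left-orderability of fundamental groups, together with the standard fact that left-orderability is preserved under group extensions. First I would recall that since $\widetilde{K}$ is an L-space knot, every sufficiently large positive surgery $S^3_{\widetilde{r}}(\widetilde{K})$ is an L-space. These surgeries can be performed $\Z/p$-equivariantly, since $\widetilde{K}$ is invariant and disjoint from the axis $\widetilde{A}$; the action then extends to $S^3_{\widetilde{r}}(\widetilde{K})$ with fixed set $\widetilde{A}$, and the underlying space of the quotient is a surgery $S^3_r(K)$ on $K$. Because the surgery region is disjoint from the branch locus, branched covering commutes with it, so $S^3_{\widetilde{r}}(\widetilde{K})$ is exactly the $p$-fold cyclic branched cover of $S^3_r(K)$ along $A$ (here $\widetilde{K} = f^{-1}(K)$ is connected precisely because $\gcd(p,\lambda)=1$), and $r \to \infty$ as $\widetilde{r} \to \infty$, so $S^3_r(K)$ ranges over the large surgeries on $K$ that detect whether $K$ is an L-space knot.

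Next I would suppose toward a contradiction that $K$ is not an L-space knot. Then no positive surgery on $K$ is an L-space; in particular $S^3_r(K)$ is not an L-space for the large $r$ above. Since $S^3_r(K)$ is irreducible for $r$ large, Conjecture \ref{conj:bgw} applies and gives that $\pi_1(S^3_r(K))$ is left-orderable. The heart of the argument is then to promote this to left-orderability of $\pi_1(S^3_{\widetilde{r}}(\widetilde{K}))$. The covering on complements is honest (unbranched), giving an index-$p$ inclusion $\pi_1(S^3_{\widetilde{r}}(\widetilde{K}) \setminus \widetilde{A}) \le \pi_1(S^3_r(K) \setminus A)$, and filling the axis back in produces a short exact sequence $1 \to N \to \pi_1(S^3_{\widetilde{r}}(\widetilde{K})) \to \pi_1(S^3_r(K)) \to 1$, where $N$ is the image of the meridional normal closure of the branch locus. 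As left-orderability is closed under extensions, if $N$ is left-orderable then so is $\pi_1(S^3_{\widetilde{r}}(\widetilde{K}))$; by Conjecture \ref{conj:bgw} this would make $S^3_{\widetilde{r}}(\widetilde{K})$ not an L-space, contradicting that $\widetilde{K}$ is an L-space knot. Hence $K$ must be an L-space knot.

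The main obstacle is precisely the step of showing that the kernel $N$ is left-orderable. This is delicate because left-orderability does not in general descend or ascend through a branched cover: a cyclic branched cover can be an L-space even when its base is not, and such behavior is exactly what $N$ failing to be left-orderable would encode. I would attack this by exploiting the special geometry of the situation — the branch locus is the \emph{unknotted} axis $A$ and the deck group is the standard rotation with $\widetilde{A}$ as fixed circle — so that the local structure near the branch locus is that of a standard rotation on a solid torus, which should keep $N$ controlled (and in particular left-orderable).

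A secondary point that I would need to address carefully is the application of Conjecture \ref{conj:bgw}, which is stated only for irreducible manifolds: I would verify that the large surgeries $S^3_r(K)$ and $S^3_{\widetilde{r}}(\widetilde{K})$ are indeed irreducible rational homology spheres (handling the finitely many exceptional surgeries separately, and noting that the unknot case is trivial), so that the equivalence between the L-space property and non-left-orderability is available at both the top and the bottom of the cover.
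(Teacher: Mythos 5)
Your overall strategy (equivariant large surgeries, passing to the quotient surgery, and using Conjecture \ref{conj:bgw} to translate the L-space question into left-orderability at both levels) matches the paper's setup. But the heart of your argument has a genuine gap, and you have correctly identified it yourself: you need left-orderability of $\pi_1(S^3_r(K))$ to force left-orderability of $\pi_1(S^3_{\widetilde{r}}(\widetilde{K}))$, and your proposed mechanism does not work. The short exact sequence $1 \to N \to \pi_1(\widetilde{Y}) \to \pi_1(Y) \to 1$ you posit does not exist in general: the map on $\pi_1$ induced by a branched covering need be neither injective nor surjective (for the unbranched part one gets an index-$p$ \emph{inclusion} $\pi_1(\widetilde{Y}\setminus\widetilde{A}) \hookrightarrow \pi_1(Y\setminus A)$, which goes the wrong way for an extension, and Dehn filling the branch locus only quotients each side by meridional normal closures, producing no surjection $\pi_1(\widetilde{Y}) \twoheadrightarrow \pi_1(Y)$). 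Even granting such an extension, you acknowledge you cannot show the kernel $N$ is left-orderable, and no amount of "standard local structure near the branch locus" will rescue this: the normal closure of a meridian in a closed $3$-manifold group is a global object, not a local one. (A smaller issue: the branch set of $\widetilde{Y} \to Y$ is the union of the axis \emph{and} the core of the surgery solid torus, not the axis alone, since the surgery coefficient upstairs is a multiple of $p$ and the deck action fixes the core of the filling torus.)

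The missing idea is the theorem of Boyer, Rolfsen, and Wiest: a compact, connected, orientable, irreducible $3$-manifold has left-orderable fundamental group if and only if there exists a \emph{non-trivial} homomorphism from its fundamental group to some left-orderable group. This replaces your need for a surjection with left-orderable kernel by the far weaker requirement that the induced map $\pi_1(\widetilde{Y}) \to \pi_1(Y)$ be non-trivial. The paper verifies non-triviality by a degree argument: if the map were trivial, the branched covering $\widetilde{Y} \to Y$ would lift to the universal cover $\overline{Y}$ of $Y$; if $\overline{Y} \neq S^3$ then $H_3(\overline{Y}) = 0$ forces the map to have degree $0$, contradicting that a $p$-fold branched covering has degree $p$, while if $\overline{Y} = S^3$ then $\pi_1(Y)$ is finite and hence not left-orderable, so there is nothing to prove. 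Your irreducibility bookkeeping (discarding the finitely many reducible surgeries) is consistent with the paper and is exactly what makes the Boyer--Rolfsen--Wiest criterion applicable.
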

\begin{proof}
Since $\widetilde{K}$ is an L-space knot, all sufficiently large surgeries on $\widetilde{K}$ are L-spaces. In particular, by taking any large surgery with surgery coefficient a multiple of $p$, we get an L-space surgery $\widetilde{Y} = S^3_{pn}(\widetilde{K})$ with a surgery curve that is equivariant with respect to the periodic action. This then induces a surgery on the quotient knot $Y = S^3_n(K)$. Furthermore, $\widetilde{Y}$ is a $p$-fold branched cover of $Y$ with branch set the union of the core of the surgery and the axis of the original periodic action. We can also assume that $\widetilde{Y}$ and $Y$ are irreducible, since there are only finitely many reducible surgeries on a given knot. 

Now we claim that if $\pi_1(Y)$ is left-orderable, then so is $\pi_1(\widetilde{Y})$. This follows directly from \cite[Theorem 1.1(1)]{BRW} if the induced map $\pi_1(\widetilde{Y}) \to \pi_1(Y)$ is non-trivial. Suppose that the map is trivial. Then we can lift the map $\widetilde{Y} \to Y$ to the universal cover $\overline{Y}$ of $Y$.  If $\overline{Y}$ is not $S^3$, then $H_3(\overline{Y}) = 0$, and so the map $\widetilde{Y} \to Y$ has degree 0, contradicting it being a $p$-fold branched cover. On the other hand, if $\overline{Y}$ is $S^3$, then $\pi_1(Y)$ is finite and hence not left-orderable. 

Now Conjecture \ref{conj:bgw} implies that if $\widetilde{Y}$ is an L-space then so is $Y$. 
\end{proof}

\section{Evidence for the main conjecture}\label{sec:5}
There is strong evidence for Conjecture \ref{conj:ineq}, both theoretically and computationally. 

\subsection{Computational Evidence}
To check Conjecture \ref{conj:ineq}, we generated pseudo-random knots and verified the conjecture for each one as follows.

First we construct a tangle $K$ on 5 strands by choosing 18 random operations from the set $\{c_i, o_i, u_i\}$. Here $c_i$ refers to a cup cap pair connecting the $i$th strand to the $(i+1)$th strand, $o_i$ refers to the $i$th strand crossing over the $(i+1)$th strand, and $u_i$ refers to crossing the $i$th strand under the $(i+1)$th strand. 

Next, we check that each $K$ we construct has closure a knot, and that the tangle for $\widetilde{K}$ constructed by repeating the operations for $K$ also has closure a knot. If either condition fails, then we choose 18 new random operations. 

Once we have a 2-periodic knot described by a tangle, we use Ozsv\'{a}th and Szab\'{o}'s knot Floer homology calculator \cite{HFKcalc} based on \cite{OS3} to compute $\widehat{\mathit{HFK}}(K)$ and $\widehat{\mathit{HFK}}(\widetilde{K})$, and verify Conjecture \ref{conj:ineq} for this pseudo-random 2-periodic knot. 

While verifying the conjecture for each knot, we also tabulated the Alexander polynomial and the total rank of the knot Floer homology for each periodic knot. The total rank of $\widehat{\mathit{HFK}}(\widetilde{K})$ ranged from 1 to 907253 with an average of about 7761.52. These data confirm that we have verified the conjecture for over 500 distinct knots. 

Additionally, we note that the signature inequality $|2\sigma(K) - \sigma(\widetilde{K})| \leq \lambda + 1$ holds for these knots even if the knot is not alternating, leading to the following conjecture.
\begin{conjecture}
Let $\widetilde{K}$ be a 2-periodic knot with quotient knot $K$, and let $\lambda$ be the linking number between $K$ and the axis. Then
\[
|2\sigma(K) - \sigma(\widetilde{K})| \leq \lambda + 1.
\]
\end{conjecture}

\subsection{The case of torus knots}

It does not seem easy to check many special cases of Theorem \ref{thm:hls} or Conjecture \ref{conj:ineq}. For torus knots, specific examples may be computed by Theorem \ref{thm:lspace}, which we have done for many torus knots.
\begin{proposition}
Conjecture \ref{conj:ineq} is true for $\widetilde{K} = T(2p,q)$ and $K = T(p,q)$ for all $p,q < 60$.
\end{proposition}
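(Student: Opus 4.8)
The plan is to exploit the fact that positive torus knots are L-space knots, so that both sides of Conjecture \ref{conj:ineq} are completely determined by the Alexander polynomials via Theorem \ref{thm:lspace}, thereby reducing the statement to a finite, explicit computation. First I would record that $T(2p,q)$ and $T(p,q)$ are L-space knots, so by Theorem \ref{thm:lspace} each of $\widehat{\mathit{HFK}}(T(2p,q))$ and $\widehat{\mathit{HFK}}(T(p,q))$ has rank $0$ or $1$ in every bigrading: the Alexander gradings carrying a nonzero group are exactly the exponents $n_i$ appearing in the symmetrized Alexander polynomial, and the Maslov grading of each summand is the integer $\delta_i$ produced by the recursion in Theorem \ref{thm:lspace}. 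The Alexander polynomials themselves are $\Delta_{T(m,n)}(t) = (t^{mn}-1)(t-1)/((t^m-1)(t^n-1))$, whose nonzero coefficients are all $\pm 1$; from these I read off the sequences $\{n_i\}$ and run the recursion to obtain $\{\delta_i\}$. This converts both sides of each inequality into explicit finite lists of $(\text{Maslov},\text{Alexander})$ bidegrees.

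Next I would fix the data of the periodic action. For the standard period-$2$ symmetry $(z_1,z_2)\mapsto(-z_1,z_2)$ of $T(2p,q)\subset S^3$, the quotient is $T(p,q)$ and the axis $A$ is the unknotted core $\{z_1=0\}$, which links both $\widetilde{K}$ and $K$ the number of times they wind in the $z_1$ coordinate, so that $\lambda = \mathrm{lk}(T(p,q),A) = q$. Note $q$ is odd since $\gcd(2p,q)=1$, so $\widetilde{a} = 2a + (\lambda-1)/2$ is an integer, as required. With $\lambda$ in hand, the grading shifts in Conjecture \ref{conj:ineq} are determined, and for each admissible pair $(p,q)$ --- those with $1\le p,q<60$, $q$ odd, and $\gcd(p,q)=1$ --- both inequalities become a finite collection of numerical comparisons, one for each Alexander grading $a$ and each Maslov threshold. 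Since $\widehat{\mathit{HFK}}$ is finitely supported, only finitely many thresholds give a nontrivial inequality, so verifying the conjecture for a given $(p,q)$ is a finite check, and there are finitely many pairs; I would carry this out by computer. Because the mirror of a positive torus knot is not a positive torus knot, I would verify both inequalities in the conjecture directly from the computed bigraded ranks rather than deducing the second from the first.

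The main obstacle is not conceptual but bookkeeping: correctly pinning down $\lambda$ and the resulting Alexander-grading identification $\widetilde{a}=2a+(\lambda-1)/2$, and then faithfully implementing the $\delta_i$ recursion of Theorem \ref{thm:lspace} together with the two one-sided Maslov sums so that every comparison is made in the right gradings. A secondary subtlety is the clash between the torus parameter $q$ and the Maslov threshold $q$ of Conjecture \ref{conj:ineq}; I would rename the latter (say to $m$) to keep the sums unambiguous. If one instead sought a uniform proof valid for all $p,q$ rather than the stated finite range, the genuine difficulty would be to understand the pattern of the $\delta_i$ for $T(2p,q)$ and $T(p,q)$ well enough to establish the Maslov-filtered inequality in closed form.
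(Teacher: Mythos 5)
Your proposal is correct and is essentially the paper's own argument: both use that torus knots are L-space knots, so Theorem \ref{thm:lspace} recovers $\widehat{\mathit{HFK}}$ from the explicit Alexander polynomials, reducing the claim to a finite computer check (with $\lambda = q$, as the paper also uses). Your write-up is simply a more detailed account of the same computation.
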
 
\begin{proof}
Since torus knots have an explicit formula for their Alexander polynomials, and are L-space knots, we used a computer to directly compute $\widehat{\mathit{HFK}}$ using Theorem \ref{thm:lspace}.
\end{proof}
On the other hand, computations for any infinite family involve understanding all terms in some cyclotomic polynomials. Nonetheless, we can check the main conjecture in this case if we restrict to only the maximal Alexander gradings, and we can verify the results of Theorem \ref{thm:lspaceineq} for torus knots even without assuming the conclusion of Conjecture \ref{conj:ineq}.

\begin{proposition}
The first inequality in Conjecture \ref{conj:ineq} is true for the maximal Alexander gradings on the $2$-periodic torus knots $T(2p,q) \to T(p,q)$.
\end{proposition}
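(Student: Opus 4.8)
The plan is to use that, at the maximal Alexander grading, the knot Floer homology of an L-space knot is completely pinned down by Theorem \ref{thm:lspace} in a way that avoids the cyclotomic difficulties obstructing the general torus-knot case: the top value $\delta_k$ is always $0$. First I would record that both $\widetilde K = T(2p,q)$ and $K = T(p,q)$ are positive torus knots, hence L-space knots, so each has $\widehat{\mathit{HFK}}$ of rank $1$ in every nonzero Alexander grading. In particular $\widehat{\mathit{HFK}}(K, g_K) \cong \F_2$ is supported in Maslov grading $0$, and likewise $\widehat{\mathit{HFK}}(\widetilde K, g_{\widetilde K}) \cong \F_2$ is supported in Maslov grading $0$, where $g_K = (p-1)(q-1)/2$ and $g_{\widetilde K} = (2p-1)(q-1)/2$ are the genera.

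Next I would identify the relevant gradings. Presenting $T(2p,q)$ as the closure of $(\sigma_1\cdots\sigma_{q-1})^{2p}$ on $q$ strands, the periodic symmetry is rotation by $\pi$ about the braid axis, with quotient the closure of $(\sigma_1\cdots\sigma_{q-1})^{p} = T(p,q)$; since a $q$-strand braid closure links its axis $q$ times, the quotient linking number is $\lambda = q$. Substituting $a = g_K$ and $\lambda = q$ into $\widetilde a = 2a + (\lambda-1)/2$ then gives, directly, $\widetilde a = g_{\widetilde K}$. Thus the maximal Alexander grading of $K$ lifts precisely to that of $\widetilde K$, and moreover $\widehat{\mathit{HFK}}(\widetilde K, \widetilde a + 1) = 0$ since $\widetilde a + 1$ exceeds the genus of $\widetilde K$.

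With these identifications in hand, both sides of the first inequality collapse to elementary step functions of the homological cutoff, which I write as $m$ to avoid clashing with the torus parameter $q$. The right-hand side $\sum_{2i \geq m+1}\rank\widehat{\mathit{HFK}}_i(K,g_K)$ equals $1$ when $m \leq -1$ and $0$ otherwise, as its only nonzero summand sits in Maslov grading $0$. The left-hand side reduces to $\sum_{i\geq m}\rank\widehat{\mathit{HFK}}_i(\widetilde K, g_{\widetilde K})$, which equals $1$ when $m \leq 0$ and $0$ otherwise. Checking the three cases $m \leq -1$, $m = 0$, and $m \geq 1$ then verifies $\mathrm{LHS} \geq \mathrm{RHS}$ throughout.

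I expect the only genuine content, and the main place to be careful, to be the grading bookkeeping of the second step: one must confirm that $\widetilde a$ rather than $\widetilde a + 1$ carries the genus-level generator of $\widetilde K$, and that the parity conventions of $\widetilde a = 2a + (\lambda-1)/2$ are consistent with Theorem \ref{thm:hls}. Everything else follows immediately from the fact that $\delta_k = 0$ at the top Alexander grading, which is exactly the feature that makes the maximal grading tractable without having to resolve the full cyclotomic structure of the Alexander polynomials.
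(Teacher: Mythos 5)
Your proof is correct and follows essentially the same route as the paper: identify $\lambda = q$, compute that the top Alexander grading $(p-1)(q-1)/2$ of $T(p,q)$ lifts to the top grading $(2p-1)(q-1)/2$ of $T(2p,q)$, and use that $\delta_k = 0$ for both L-space knots; your explicit case check on the cutoff $m$ just spells out what the paper leaves implicit. (One cosmetic slip: an L-space knot has rank at most $1$ in each Alexander grading where it is supported, not rank $1$ in every nonzero Alexander grading, but this plays no role in your argument.)
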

\begin{proof}
Since torus knots have L-space surgeries, we can use Theorem \ref{thm:lspace} to compute $\widehat{\mathit{HFK}}$. Recall that 
\[
\Delta_{T(p,q)}(t) = \dfrac{(t^{pq}-1)(t-1)}{(t^p-1)(t^q-1)}
\]
has degree $(p-1)(q-1)$, and that in this case the linking number between the axis and knot is $\lambda = q$. By Theorem \ref{thm:lspace}, the maximum Alexander grading for $T(p,q)$ is $(p-1)(q-1)/2$, half the width of $\Delta_{T(p,q)}$, which lifts to the Alexander grading
\[
(p-1)(q-1) +\dfrac{q-1}{2} =  \dfrac{2pq-2p-q+1}{2} = \dfrac{(2p-1)(q-1)}{2}.
\]
Conveniently, this is the maximum Alexander grading for $\Delta_{T(2p,q)}$. And indeed, these Alexander polynomials are monic, and both the $\delta_i$'s from Theorem \ref{thm:lspace} are 0, giving the desired result.
\end{proof}
\begin{remark}
The above proposition is also true, with essentially the same proof, for the mirror knots, or equivalently for the minimum Alexander grading in the second inequality in Conjecture \ref{conj:ineq}.
\end{remark}

\begin{proposition}
The conclusions of Theorem \ref{thm:lspaceineq} hold for torus knots, without assuming Conjecture \ref{conj:ineq}.
\end{proposition}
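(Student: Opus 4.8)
The plan is to verify the three conclusions of Theorem \ref{thm:lspaceineq} --- the inequality between the number of terms of $\Delta_{\widetilde{K}}$ and $\Delta_K$, the bound of one term beyond exponent $2n + \tfrac{\lambda+1}{2}$, and the total bound of $4n + \lambda + 4$ terms --- directly from the explicit degrees of the torus knot Alexander polynomials, so that Conjecture \ref{conj:ineq} is never invoked.

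First I would record the constants: for $K = T(p,q)$ the width of $\Delta_K$ is $n = (p-1)(q-1)$ and the axis linking number is $\lambda = q$, exactly as in the preceding propositions. The term-count inequality is already a consequence of Theorem \ref{thm:hls}, which does not use the conjecture, so for torus knots (both of which are L-space knots, with each nonzero term of the Alexander polynomial contributing rank one to $\widehat{\mathit{HFK}}$) there is nothing further to check beyond this observation.

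The substance is the exponent bound. As computed in the preceding proposition, the maximal Alexander grading of $\Delta_{T(2p,q)}$ is $\tfrac{(2p-1)(q-1)}{2}$, the lift of the top grading of $T(p,q)$. I would then subtract this from the threshold $2n + \tfrac{\lambda+1}{2}$ and simplify: the difference collapses to $(p-1)(q-1) + 1 = n+1$, which is strictly positive for every admissible pair (odd $q$ coprime to $2p$, and $p \geq 1$). Hence the \emph{entire} support of $\Delta_{\widetilde{K}}$ lies below the threshold --- there are zero terms beyond it, which is strictly stronger than the claimed bound of one. The palindromic symmetry $\Delta_{\widetilde{K}}(t) = \Delta_{\widetilde{K}}(t^{-1})$ immediately yields the mirrored statement at the bottom of the grading range, so no term has exponent below $-2n - \tfrac{\lambda+1}{2}$ either.

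For the total count I would bound the number of nonzero terms by the span of $\Delta_{T(2p,q)}$, namely the $(2p-1)(q-1) + 1$ possible exponents, and compare with $4n + \lambda + 4$; the difference simplifies to $2\big((p-1)(q-1) + 2\big) = 2(n+2) > 0$, so the bound again holds with room to spare. The main point is therefore not an obstacle but a recognition: the threshold and total bound of Theorem \ref{thm:lspaceineq} are far from tight for torus knots, so the conclusions reduce to the two clean positive inequalities $n+1 > 0$ and $2(n+2) > 0$ once the degree of $\Delta_{T(2p,q)}$ has been recorded. The only care required is the degenerate case $p=1$, where $K$ is the unknot and $n = 0$; both inequalities remain valid there.
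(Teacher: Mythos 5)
Your proof is correct and takes essentially the same route as the paper's: a direct comparison of the degree of $\Delta_{T(2p,q)}$ with the threshold $2n + \tfrac{\lambda+1}{2}$, concluding that no term exceeds it (so the bound of one holds vacuously), with the term-count inequality already supplied by Theorem \ref{thm:hls}. The paper states this in one line and leaves the arithmetic implicit; you simply carry out the simplifications (difference $n+1>0$, and $2(n+2)>0$ for the total count), which is a welcome elaboration rather than a different argument.
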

\begin{proof}
This follows immediately by checking the degrees of the Alexander polynomials for torus knots. As in the previous proposition, we see that there are no terms in $\Delta_{T(2p,q)}$ larger than $2\cdot$width$(\Delta_{T(p,q)}) + (q+1)/2$.
\end{proof}

\subsection{Adapting the Morse homology proof}\label{sec:5.3}
Finally, we would like to point out how a naive attempt at adapting the proof of Theorem \ref{thm:floyd} to prove Conjecture \ref{conj:ineq} fails. In fact, most of the proof works similarly.
\begin{proposition}
If the spectral sequence (\ref{eqn:ss}) does not contain any staircases beginning with a vertical differential on the top left and ending with a horizontal differential on the bottom right, then Conjecture \ref{conj:ineq} holds.
\end{proposition}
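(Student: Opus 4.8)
The plan is to transport the proof of Theorem~\ref{thm:floyd} to the bicomplex underlying the spectral sequence (\ref{eqn:ss}), with the hypothesis playing the role of Lemma~\ref{lemma:top}. Since (\ref{eqn:ss}) splits along Alexander gradings exactly as in the proof of Theorem~\ref{thm:hls}, I would fix an Alexander grading $a$ of $K$—equivalently the pair $\widetilde a = 2a + (\lambda-1)/2$ and $\widetilde a + 1$ of $\widetilde K$—and work one summand at a time. This summand is vertically bounded because $\widehat{\mathit{HFK}}(\widetilde K) \otimes V \otimes W$ is finite-dimensional, so Lemma~\ref{lemma:stair} expresses it as a direct sum of squares and staircases, each compatible with the $\F_2[\theta,\theta^{-1}]$-action.

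First I would discard the squares, which contribute nothing to the $E^\infty$-page of either the horizontal or the vertical spectral sequence. The remaining content is carried by the staircases, and the decisive point is the classification by end-type. A staircase whose two ends have opposite types is acyclic and contributes nothing; a staircase with both ends of sink type ($a_0 \neq 0$, $b_n = 0$) contributes a single class to the common homology, seen by the vertical spectral sequence (hence by $\widehat{\mathit{HFK}}(\widetilde K)$) at its top generator $a_0$ and by the horizontal spectral sequence (hence by $\widehat{\mathit{HFK}}(K)$) at its bottom generator $a_n$. Because the staircase descends from upper-left to lower-right, the $\widetilde K$-representative sits in vertical grading at least that of the $K$-representative, which is precisely the order relation the filtered inequality needs. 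The only staircases reversing this order are those with both ends of source type: these begin with a vertical differential at the top left ($a_0 = 0$) and end with a horizontal differential at the bottom right ($b_n \neq 0$), and are exactly the ones excluded by hypothesis. In the Morse setting they are instead excluded by Lemma~\ref{lemma:top}, whose proof relies on the fixed subcomplex of Lemma~\ref{lemma:sub}; the absence of any such subcomplex in knot Floer homology is precisely why the hypothesis must be assumed rather than proved.

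With the order-reversing staircases gone, each class of $\widehat{\mathit{HFK}}(K,a)$ is injectively matched with a class of $\widehat{\mathit{HFK}}(\widetilde K,\widetilde a) \oplus \widehat{\mathit{HFK}}(\widetilde K, \widetilde a + 1)$ lying in at-least-as-high vertical grading. Summing over a filtration threshold then yields the first inequality of Conjecture~\ref{conj:ineq}, and the second follows by running the same argument on the mirrors of $K$ and $\widetilde K$, as observed after the statement of the conjecture.

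The step I expect to be the real obstacle is the grading bookkeeping, not the homological algebra, which is identical to the Morse case. One must convert ``vertical grading in (\ref{eqn:ss})'' into the Maslov-grading thresholds of the conjecture while carrying the shifts contributed by the tensor factors $V$ and $W$ and by $\theta$. In particular, the factor of two relating the two sides—threshold $q$ for $\widetilde K$ against $2i \ge q+1$ for $K$—must be extracted from the grading conventions of \cite[Theorem 1.16]{HLS}, arising when the vertical grading of the $K$-representative is identified with the Maslov grading of $\widehat{\mathit{HFK}}(K)$. Verifying that the staircase-by-staircase comparison lands on exactly these thresholds, with the correct parity and the $(\lambda-1)/2$ shift, is where the argument demands the most care.
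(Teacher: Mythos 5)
Your proposal is correct and follows essentially the same route as the paper, which simply states that the hypothesis replaces Lemma \ref{lemma:top} (ultimately Lemma \ref{lemma:sub}) and that the rest of the argument is identical to the proof of Theorem \ref{thm:floyd}; your staircase-by-staircase analysis is a faithful unpacking of that one line. The grading bookkeeping you flag as the delicate step is handled in the paper at the same level of detail you give: the factor of $2$ comes from the identification of the $E^\infty$ page with $\widehat{\mathit{HFK}}_*(K)\otimes W\otimes\F_2[\theta,\theta^{-1}]$ and the shift by $1$ from the extra tensor factor $V$.
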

\begin{proof}
This condition is a slightly weaker replacement of Lemma \ref{lemma:sub}. From there, the proof follows identically to that of Theorem \ref{thm:floyd}. The factor of 2 in the grading shift comes from the identification of the $E^{\infty}$ page with $\widehat{\mathit{HFK}}_*(K)\otimes W \otimes \F_2[\theta,\theta^{-1}]$ as in \cite{HLS}. The shift by 1 in the grading comes from the extra $V$ vector space in the spectral sequence.
\end{proof}

\bibliography{bibliography}{}
\bibliographystyle{alpha}

\end{document}